\documentclass[a4paper]{scrartcl}
\usepackage[]{amsmath,amssymb} \usepackage{amsthm} \usepackage{enumitem}
\usepackage[utf8]{inputenc} \usepackage[T1]{fontenc} \usepackage{lmodern}
\usepackage{fourier} \usepackage{graphicx}
  \def\dif{\,\D} 
 \usepackage{pgfplots}
\pgfplotsset{compat=1.15}
\usepackage{tikz}
\usetikzlibrary{arrows}
\def\R{{\mathbf R}}  \def\N{{\mathbf N}} 

\def\car#1{{\mathbf 1}} 
\newcommand{\esp}[1]{{\mathbf E}\left[#1\right]} 

\def\P{\mathbf P}  
\def\/{\,|\,} 
\newcommand{\var}{\operatorname{Var}}
\newcommand{\cov}{\operatorname{Cov}}
 \def\NN{\mathfrak N} 
 \def\indic{\mathbf 1}
\def\/{\,  |\,}
 \def\car{\mathbf 1} 
\def\history{\mathcal F}  
\def\l{\mathfrak l}  \def\L{\operatorname{L}}

 \def\P{\mathbf{P}} \def\R{\mathbf{R}} 
\def\N{\mathbf{N}} \def\1{\mathbf{1}}

\def\cov{\operatorname{cov}}  \def\L{\operatorname{L}}
 
 \def\dif{\operatorname{d}\!}

\def\dist{\operatorname{dist}}

\definecolor{fonce}{HTML}{415458} \definecolor{cadet}{HTML}{5D737E}
\definecolor{vertdeau}{HTML}{7E795D} \definecolor{ocre}{HTML}{58414F}
\definecolor{creme}{HTML}{7E5D72}

\RequirePackage[framemethod=default]{mdframed} \newmdenv[skipabove=7pt,
skipbelow=7pt, rightline=false, leftline=true, topline=false, bottomline=false,
backgroundcolor=ocre!10, linecolor=ocre, innerleftmargin=5pt,
innerrightmargin=20pt, innertopmargin=5pt, innerbottommargin=5pt,
leftmargin=0cm, rightmargin=0cm, linewidth=4pt]{eBox}

\makeatletter
\newtheoremstyle{ocrenumbox}
{0pt}
{0pt}
{\itshape}
{}
{\small\bfseries\sffamily\color{cadet}}
{\;}
{0.25em}
{\small\sffamily\color{ocre}\thmname{#1}\nobreakspace\thmnumber{\@ifnotempty{#1}{}\@upn{#2}}
  \thmnote{\nobreakspace\the\thm@notefont\sffamily\bfseries\color{black}---\nobreakspace#3.}} 

\newtheoremstyle{blacknumex}
{5pt}
{5pt}
{\normalfont}
{} 
{\small\bfseries\sffamily}
{\;}
{0.25em}
{\small\sffamily{\tiny\ensuremath{\blacksquare}}\nobreakspace\thmname{#1}\nobreakspace\thmnumber{\@ifnotempty{#1}{}\@upn{#2}}
  \thmnote{\nobreakspace\the\thm@notefont\sffamily\bfseries---\nobreakspace#3.}}

\newtheoremstyle{blacknumbox} 
{0pt}
{0pt}
{\small} {}
{\small\bfseries\sffamily}
{~}
{0.25em}
{\small\sffamily\thmname{#1}\nobreakspace\thmnumber{\@ifnotempty{#1}{}\@upn{#2}}
  \thmnote{\nobreakspace\the\thm@notefont\sffamily\bfseries---\nobreakspace#3.}}

\newtheoremstyle{ocrenum}
{5pt}
{5pt}
{\normalfont}
{}
{\small\bfseries\sffamily\color{ocre}}
{\;}
{0.25em}
{\small\sffamily\color{ocre}\thmname{#1}\nobreakspace\thmnumber{\@ifnotempty{#1}{}\@upn{#2}}
  \thmnote{\nobreakspace\the\thm@notefont\sffamily\bfseries\color{black}---\nobreakspace#3.}} 
\makeatother

\newmdenv[skipabove=7pt, skipbelow=7pt, rightline=false, leftline=true,
topline=false, backgroundcolor=ocre!10, bottomline=false, linecolor=ocre,
innerleftmargin=5pt, innerrightmargin=20pt, innertopmargin=5pt,
innerbottommargin=5pt, leftmargin=0cm, rightmargin=0cm, linewidth=4pt]{dBox}

\newmdenv[skipabove=7pt, skipbelow=7pt, skipbelow=7pt, backgroundcolor=fonce!10,
rightline=false, leftline=true, bottomline=false, topline=false, linewidth=4pt,
linecolor=fonce, innerleftmargin=5pt, innerrightmargin=20pt, innertopmargin=5pt,
leftmargin=0cm, rightmargin=0cm, innerbottommargin=5pt]{tBox}

\theoremstyle{ocrenumbox} \newtheorem{theoremeT}{Theorem}
\newtheorem{lemmaT}[theoremeT]{Lemma}
\newtheorem{corollaryT}[theoremeT]{Corollary}
\theoremstyle{blacknumex}
\newtheorem{exampleT}{Example} \newtheorem{remarkT}{Remark}

\theoremstyle{blacknumbox}
\newtheorem{definitionT}{Definition}
\newenvironment{theorem}{\begin{tBox}\begin{theoremeT}}{\end{theoremeT}\end{tBox}}
\newenvironment{lemma}{\begin{tBox}\begin{lemmaT}}{\end{lemmaT}\end{tBox}}
\newenvironment{definition}{\begin{dBox}\begin{definitionT}}{\end{definitionT}\end{dBox}}

\numberwithin{theoremeT}{section} \numberwithin{definitionT}{section}

\def\<{\left\langle} \def\>{\right\rangle}
\def\Lip{\operatorname{Lip}}
\def\KR{\text{\textsc{kr}}}

\def\card{\operatorname{card}}

\def\<{\left\langle} \def\>{\right\rangle}
\def\Lip{\operatorname{Lip}}
\def\KR{\text{\textsc{kr}}}
\def\LP{\text{\textsc{lp}}}
\def\R{{\mathbf R}}

\def\N{{\mathbf N}}

\def\car#1{{\mathbf 1}}

\def\P{\mathbf P}

\def\/{\,|\,} 

\def\dist{\operatorname{dist}}
\def\dif{\text{ d}}

\def\L{\operatorname{L}}
\def\history{\mathcal{F}}
\def\indic{\mathbf{1}}

\title{Stein's functional method for weakly dependent random variables} \author{L.
  Coutin \and S. Darou Kebe \and L. Decreusefond} \date{2026}
\begin{document}
\maketitle{}
\begin{abstract}
  This article extends Stein's functional method to establish explicit
  rates of convergence in the Wasserstein-1 distance for Donsker's invariance
  principle under weak dependence. Our approach first reduces the problem of
  bounding distances between distributions on a function space to estimating
  distances between finite-dimensional marginals. We then employ the block technique, which is standard in Stein's methodology, to handle dependence. While our analysis focuses on
  $\phi$-mixing sequences, the method extends to other forms of weak dependence
  admitting suitable covariance inequalities.
\end{abstract}

\noindent{Keywords:  invariance principle, Stein's method, weak dependence}

 \noindent{Math Subject Classification: 60F17}

\section{Motivations}
\label{sec:motivations}
\def\law{\mathcal{L}}

Donsker's theorem is a cornerstone of functional central
limit theory, establishing the convergence of random walks to Brownian motion
under independence assumptions. However, modern applications in time series,
spatial data, and stochastic processes in physics and finance necessitate
relaxing independence to weak dependence structures, such as mixing conditions
(e.g., $\alpha$-, $\beta$-, $\rho$-mixing). While sharp rates of convergence for
independent variables are well understood, analogous quantitative bounds for
dependent variables---especially in functional spaces---remain scarce. This work
is motivated by the need to extend the results of \cite{Coutin2013}, which
assessed the rate of convergence in Donsker's theorem for independent
random variables, to weakly dependent variables.

There are more than seventy notions of distance between probability measures
(see \cite{Rachev1998}), but many of them are variations on the L\'evy--Prokhorov
distance or on the Wasserstein-1 (also known as Kantorovitch--Rubinstein)
distance. For $\mu$ and $\nu$ two probability measures on a metric space $(E,d)$
with Borel $\sigma$-field $\mathcal{A}$, the L\'evy--Prokhorov distance is
defined as
\begin{multline*}
  \dist_{\LP}(\mu,\, \nu)=\max\Bigl( \inf\bigl\{\epsilon>0\, :\, \mu(A)\le
  \nu(A^{\epsilon})+\epsilon \ \text{ for all closed } A\subset E\bigr\},\\
  \inf\bigl\{\epsilon>0\, :\, \nu(A)\le \mu(A^{\epsilon})+\epsilon\ \text{ for all closed
  }A\subset E \bigr\}\Bigr)
\end{multline*}
where $A^{\epsilon}=\{x\in E\, :\, d(x,A)\le \epsilon\}$.
The Kantorovitch--Rubinstein (or Wasserstein-$1$) distance is defined as
\begin{equation*}
  \dist_{\KR(E)}(\mu,\, \nu)=\sup_{f\in \operatorname{Lip}_{1}(E,d)}\left( \int_{E}f\dif \mu-\int_{E}f\dif \nu \right)
\end{equation*}
where $\operatorname{Lip}_{1}(E,d)$ is the set of $1$-Lipschitz functions:
\begin{equation*}
  \operatorname{Lip}_{1}(E,d)=\Bigl\{f\,:\, E\to \R, |f(x)-f(y)|\le d(x,y),\, \forall x,y\in E\Bigr\}.
\end{equation*}
The L\'evy--Prokhorov distance metrizes convergence in distribution. The
Kantorovitch--Rubinstein distance is more stringent, as
$\dist_{\KR(E)}(\mu_{n},\,\mu)$ tends to $0$ if and only if $\mu_{n}$ converges
narrowly to $\mu$ and, for any $x_{0}\in E$,
\begin{equation*}
  \int_{E}d(x_{0},x)\dif \mu_{n}(x)\xrightarrow{n\to\infty} \int_{E}d(x_{0},x)\dif \mu(x).
\end{equation*}
For real-valued random variables, the $\L^{1}$ distance between distribution
functions,
\begin{equation*}
  \int_{\R}\left|\mu\bigl((-\infty,x]\bigr)-\nu\bigl((-\infty,x]\bigr)\right|\dif x,
\end{equation*}
which in fact coincides with $\dist_{\KR(\R)}(\mu,\nu)$, is often invoked. Early results (e.g., \cite{agnew1957estimates,esseen1958mean})
established rates of convergence for the classical central limit theorem in this
metric. In the functional setting (see \cite{prokhorov_1956,Komlos1975}),
Donsker's theorem (or equivalently, the invariance principle) states that for
independent and identically distributed random variables $(X_{k},\ k\ge 1)$ with
$\esp{X_{k}} = 0$, $\var(X_{k}) = 1$, and $\esp{|X_k|^p} < \infty$ for some
$p = 2+2\delta'>2$, we have, in the L\'evy--Prokhorov distance,
\begin{equation}
  \label{eq_main_bis:11}\dist_{\LP}\Bigl(\law(X^{n}),\ \mu\Bigr)=O(n^{-\delta'/(3+2\delta')}),
\end{equation}
where $\mu$ is the Wiener measure on the set of continuous functions from
$[0,1]$ to $\R$ and $X^{n}$ is the affine interpolation of the random walk
\begin{equation*}
  X^{n}(t)=\sqrt{n}\ \sum_{k=1}^n X_k \, r_k^n(t)
\end{equation*}
where
\begin{equation*}
  r_k^n(t)=\int_0^t \indic_{[k/n,\ (k+1)/n)}(s)\dif s.
\end{equation*}
In \cite{coutin2020stein}, it is shown that under the hypothesis of the
existence of third moments, we have for $(\eta,p)$ such that $1/p<\eta<1/2$,
\begin{equation*}
  \dist_{\KR(W_{\eta,p})}\left(\law(X^{n}),\ \mu\right)=O\left(n^{-1/6+\eta/3}\log n\right)
\end{equation*}
where $W_{\eta,p}$ is a fractional Sobolev space defined below, whose
particularly interesting feature is to be continuously embedded in the set of
$(\eta-1/p)$-H\"older continuous
functions. 
If we choose $p=3$ and $\eta$ close to $1/3$, we obtain a rate of convergence slightly
worse than $n^{-1/18}\log n$, instead of the rate $n^{-1/8}$ obtained
in~\cite{borovkov_1973,Komlos1975}. Whether this gap arises from the
Kantorovitch--Rubinstein distance being stronger than the L\'evy--Prokhorov metric,
or from a methodological difference, remains an open question. A natural
extension of these results is to relax the independence hypothesis. In
\cite{borovkov_1973,ibragimov_linnik_1971}, the authors developed estimates for
the rate of convergence in the L\'evy--Prokhorov distance for stationary or
non-stationary $\phi$-mixing sequences. The papers \cite{yurinskii_1976, zolotarev_1977,
  berkes_philipp_1979} contributed to the estimation of distances between
distributions and the approximation of sums of dependent random vectors. In
\cite{Haeusler1984}, it is proved that \eqref{eq_main_bis:11} still holds when
$(X_{k},\, k\ge 1)$ is a real-valued functional of a Doeblin process, hence a
geometrically $\phi$-mixing sequence. Most of these papers
are based on the following elementary inequality (see \cite{Sawyer1972}):
\begin{equation*}
  \dist_{\LP}\left(\law(X),\law(Y)\right)\le \epsilon +\P(d(X',Y')\ge \epsilon)
\end{equation*}
for any $\epsilon>0$, where $(X',Y')$ is a coupling of $X$ and $Y$. A
coupling sharp enough for this purpose is then obtained through the Skorohod
embedding theorem.
Stein's method is a completely different approach and is restricted to
estimating distances that can be expressed in the form
\begin{equation*}
  \dist_{\mathcal{H}}(\mu,\, \nu)=\sup_{f\in \mathcal{H}}\left(\int_{E}f\dif \mu-\int_{E}f\dif \nu\right)
\end{equation*}
where $\mathcal{H}$ is a set of test functions (bounded and Lipschitz for the
Fortet--Mourier distance, bounded for the total variation distance, indicator
functions for the Kolmogorov distance, etc.).
Since its early development in the 1970s, Stein's method has been known to
accommodate dependence between random variables well; see
\cite{ross2011fundamentals} for a survey. For the classical CLT, we refer to
\cite{baldi1989normal,sunklodas2007normal,takahata1983central} for rates of
convergence under different hypotheses of dependence and conditions of
integrability. The framework of \cite{sunklodas2007normal} and others
shows that the polynomial decay of the mixing coefficients governs the rate of
convergence in Wasserstein distance.

In functional settings, the literature on Stein's method is much sparser. Since
the pioneering work of Barbour~\cite{Barbour1990}, one may cite
\cite{Coutin2013,coutin2020stein,Bourguin2020,Shih201109}. Two main strategies
are used to assess the rate of convergence towards a process such as Brownian
motion: using the characterization with the generator of the limiting Markov
process, as in \cite{Coutin2013,Shih201109,Bourguin2020,zbMATH07873656}, or
using a method similar to the classical tightness plus convergence of
finite-dimensional distributions, which requires estimates of the modulus of
continuity of the target process \cite{Coutin2013,Coutin2025b}. Here, we choose
the latter, as it is much easier to handle. We now explain this strategy of proof. We have to compare the law
\begin{equation*}
  X^n(t)=\sum_{k=0}^{2^{n}-1} X_k \, h_{k}^n(t)
\end{equation*}
where $h_{k}^{n}=2^{n/2}r_{k}^{n}$ to the law of $\sigma B$, where $\sigma>0$ and
$B$ is an ordinary Brownian motion. We first consider the affine interpolation
of both processes, $\pi_{m}X^{n}$ and $\pi_{m}(\sigma B)$. They can be
written (see below for the details) as
\begin{equation*}
  \pi_{m}X^{n}=\sum_{i=0}^{2^{m}-1}U_{m,i}h_{i}^{m}\text{ and } \pi_{m}(\sigma B)=\sum_{i=0}^{2^{m}-1}B_{m,i}h_{i}^{m}
\end{equation*}
where the $U_{m,i}$ and $B_{m,i}$ are real-valued random variables. One of the
seldom used properties of the Kantorovitch--Rubinstein metric is that it admits
the alternative representation:
\begin{equation*}
  \dist_{\KR(E)}(\mu,\nu)=\inf_{X\sim\mu,Y\sim \nu}\esp{d(X,Y)}.
\end{equation*}
If $\Bigl((\hat{U}_{m,i},\hat{B}_{m,i}),\, 0\le i\le 2^{m}-1\Bigr)$ is a coupling of
$(U_{m,i},\, 0\le i \le 2^{m}-1)$ and $(B_{m,i},\, 0\le i \le 2^{m}-1)$ for the
$\ell^{1}$ distance on $\R^{2^{m}}$, we have
\begin{multline*}
  \dist_{\KR(W_{\eta,p})}\Bigl(\law( \pi_{m}X^{n}),\ \law(\pi_{m}(\sigma B))\Bigr)\\
  \begin{aligned}
     & =\sup_{F\in \Lip_{1}(W_{\eta,p})}\left(\esp{F(\pi_{m}X^{n})}-\esp{F(\pi_{m}(\sigma B))}\right)                                                      \\
     & =\sup_{F\in \Lip_{1}(W_{\eta,p})}\left(\esp{F(\sum_{i=0}^{2^{m}-1}\hat U_{m,i}h_{i}^{m})}-\esp{F(\sum_{i=0}^{2^{m}-1}\hat B_{m,i}h_{i}^{m})}\right) \\
     & \le \esp{ \left\|\sum_{i=0}^{2^{m}-1} (\hat U_{m,i}-\hat B_{m,i})h_{i}^{m}\right\|_{W_{\eta,p}} }                                                   \\
     & \le \sum_{i=0}^{2^{m}-1} \|h_{i}^{m}\|_{W_{\eta,p}}\ \dist_{\KR(\R)}\bigl(\law(U_{m,i}),\, \law(B_{m,i})\bigr).
  \end{aligned}
\end{multline*}
Provided that we can estimate the distance between the original processes and
their affine interpolations of index $m$, we are led to a one-dimensional problem
of the kind Stein's method is designed to handle. Indeed, up to a normalization,
each $U_{m,i}$ is the sum of the $X_{j}$ over the $i$-th block of the dyadic
subdivision of mesh $2^{-m}$, hence is of the form
\begin{equation*}
  \frac{1}{\sqrt{q}}\sum_{j=1}^{q} X_{j},
\end{equation*}
where $q$ denotes the common size of the blocks and the $X_{j}$ are our initial
random variables. The random variables
$B_{m,i}$ are simply centered Gaussian random variables of variance
$\sigma^{2}$. It is well known that
\begin{multline*}
  \dist_{\KR(\R)}\left(\law\Bigl( \frac{1}{\sqrt{q}}\sum_{j=1}^{q} X_{j}\Bigr),\, \law(\mathcal{N}(0,\sigma^{2}))\right)
  \\  =\sup_{f\in \mathcal{H}}\left(\esp{\frac{1}{\sqrt{q}}\sum_{j=1}^{q} X_{j}f\Bigl(\frac{1}{\sqrt{q}}\sum_{k=1}^{q} X_{k}\Bigr)}-\sigma^{2}\esp{f'\Bigl(\frac{1}{\sqrt{q}}\sum_{j=1}^{q} X_{j}\Bigr)}\right)\\
  =\sup_{f\in \mathcal{H}}\left(  A_{1}(f)-A_{2}(f) \right),
\end{multline*}
where $\mathcal{H}$ is the set of $\mathcal{C}^{2}_{b}$ functions with
$\|f^{\prime \prime}\|_{\infty}\le (2/\pi)^{1/2}$. The usual way to proceed is
to transform $A_{1}(f)$ so that it looks like $A_{2}(f)$. When the $X_{i}$ are
independent, we generally use the leave-one-out method, which is
based on the remark that $X_{i}$ and $\sum_{j=1}^{q} X_{j}-X_{i}$ are
independent, so that we have
\begin{equation*}
  A_{1}(f)= \frac{1}{\sqrt{q}}\sum_{j=1}^{q} \esp{ X_{j}\left(f\Bigl(\frac{1}{\sqrt{q}}\sum_{k=1}^{q} X_{k}\Bigr)-f\Bigl(\frac{1}{\sqrt{q}}\sum_{k\neq j} X_{k}\Bigr)\right)}.
\end{equation*}
The remainder of the proof proceeds via Taylor
expansions where necessary. In the presence of dependence, this scheme no longer
works and we resort to the so-called block method (see
\cite{takahata1983central}). Instead of removing only one random variable, we
remove a whole block, of a size chosen so as to make the blocks almost
independent, see Theorem~\ref{thm_mixing_wiener:decomposition}. This is where
the type of weak dependence enters: the rate of convergence varies with the
strength of the correlations. We work here with $\phi$-mixing, but the method
carries over to any other kind of mixing condition. The difficulty lies in the
optimization problem we have to solve to obtain a good rate of convergence. We
actually have to strike a balance between the step of the affine interpolation
and the size of the blocks. If the step of the interpolation is too small, there
are not enough random variables for the CLT to be applicable; if it is too
large, there may be a huge difference between the original process and its
interpolation. If the block size is too small, the correlations are too strong
to be controlled; if it is too large, we are back to the original problem. One
last word before turning to the core of the paper: we work here with the dyadic
affine interpolation in order to suppress some side effects which appear when
using the usual interpolation of step $1/m$. This can be addressed as in \cite{coutin2020stein};
however, for simplicity, we choose to avoid this additional technicality.

The paper is organized as follows. In Section~\ref{sec:preliminaries}, we
introduce the concept of $\phi$-mixing, along with the maximal and covariance
inequalities that underpin our analysis. Section~\ref{sec:convergence-rate} is
dedicated to establishing convergence results. Therein, we first assess the
distance between the original process and its affine interpolation. Leveraging
the properties of the Kantorovitch--Rubinstein distance, we then reduce the
problem to quantifying the convergence rate of certain finite-dimensional
distributions towards a Gaussian vector. Finally, the optimal rate is derived by
optimizing the step size of the affine interpolation.

\section{Preliminaries}
\def\Hol{\operatorname{Hol}}
\label{sec:preliminaries}
In the study of functional convergence, it is essential to first specify the
functional spaces within which the rate of convergence is to be evaluated. Here,
we consider the Slobodeckij spaces, which,
for $p\in [1,\infty)$ and $\eta\in (0,1)$, are defined as
\begin{equation*}
  W_{\eta,p}=\Bigl\{f\, :\, [0,1]\to \R,\ \iint_{[0,1]^{2}}\frac{|f(t)-f(s)|^{p}}{|t-s|^{1+\eta p}}\dif s \dif t<\infty\Bigr\}.
\end{equation*}
When equipped with the norm
\begin{equation*}
  \|f\|_{W_{\eta,p}}=\|f\|_{\L^{p}([0,1])}+\left(\iint_{[0,1]^{2}}\frac{|f(t)-f(s)|^{p}}{|t-s|^{1+\eta p}}\dif s \dif t\right)^{1/p},
\end{equation*}
$W_{\eta,p}$ becomes a separable Banach space. Furthermore, it satisfies the
following Sobolev embedding:
\begin{equation*}
  \eta-\frac{1}{p}>0 \Longrightarrow W_{\eta,p}\subset \Hol(\eta-1/p)
\end{equation*}
where $\Hol(\lambda)$ is the space of $\lambda$-H\"older continuous functions on
$[0,1]$. It is well known that Brownian motion has sample paths which belong
almost surely to $W_{\eta,p}$ for any $\eta<1/2$ and $p>1/\eta$.

All the
random variables considered are defined on a probability space
$(\Omega, \mathcal{A}, \P )$. We denote by $\L^{p}$ the set of $p$-integrable
random variables on~$(\Omega, \mathcal{A}, \P )$. For a sub-$\sigma$-field
$\mathcal{G}\subset \mathcal{A}$, we denote by $\L^{p}(\mathcal{G})$ the
set of random variables which belong to $\L^{p}$ and are $\mathcal{G}$-measurable.
For a finite set $V$, $|V|$ denotes its
cardinality. We denote by $\mathcal{U}$ the set of finite subsets of $\N$.
For $V_{1}, V_{2}\in \mathcal{U}$, we set
\begin{equation*}
  d(V_1, V_2)= \min\{ |a-b|\, :\, a\in V_1,\ b\in V_2 \}.
\end{equation*}
For $V\in\mathcal{U}$, we set $\history(V)= \sigma\{X_a,\, a\in V\}$ if
$V\neq \emptyset$, and $\history(\emptyset)=\{\emptyset,\Omega\}$.
For $V_{1},V_{2}$ in $\mathcal{U}$,
let
\begin{equation*}
  \varphi(V_{1},V_{2})=\sup_{A\in \history(V_{1}),B\in \history(V_{2}),\P(A)>0}\left|\P(B\/A)-\P(B)\right|,
\end{equation*}
and define the $\phi$-mixing coefficients by
\begin{equation}
  \label{eq_03-main-results:1}\phi(n)=\sup_{k\ge 1}\ \sup_{l\ge 0}\ \varphi\Bigl(\{0,\dots,k\},\ \{k+n,\dots,k+n+l\}\Bigr).
\end{equation}
\begin{definition}
  A sequence $X=(X_{n},\, n\ge 0)$ is said to be $\phi$-mixing if the sequence
  $\phi$ defined in~\eqref{eq_03-main-results:1} tends to $0$ as $n$ goes to
  infinity.
\end{definition}
\begin{definition}
  We denote by $\mathcal{W}_{\delta}$ the set of sequences
  $X=(X_{n},\, n\ge 0)$ which are centered, weakly stationary in $\L^{2}$ and
  bounded in $\L^{2+\delta}$ for some $\delta>0$, i.e.,
  \begin{equation}\label{C11_ineq}
    \sup_{k\in \N} \|X_{k}\|_{\L ^{2+\delta}}<\infty.
  \end{equation}
  We denote by $\mathcal{W}_{\delta,\phi}$ the set of sequences in
  $\mathcal{W}_{\delta}$ which are $\phi$-mixing with
  \begin{equation*}
    \sum_{n\ge 0} \phi(n)^{1/2}<\infty.
  \end{equation*}
\end{definition}
The following property of $\phi$-mixing sequences is established
in~\cite{nahapetian1980central}.
\begin{theorem}
  \label{thm:nahapetian}
  For $X\in \mathcal{W}_{\delta,\phi}$, there exists a non-decreasing function
  $M\, :\, \N\to [1,+\infty)$ such that, for any $V_1, V_2 \in \mathcal{U}$, we
  have
  \begin{equation}\label{cond-mixing}
    \sup_{A\in\history(V_1)}\bigl|\P(A\/\history(V_2)) -\P (A)\bigr|\leq M\bigl(|V_1|\bigr)\ \phi\bigl(d(V_1,V_2)\bigr).
  \end{equation}
\end{theorem}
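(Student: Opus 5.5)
The plan is to reduce the general configuration $(V_1,V_2)$ to the ``past/future'' configuration appearing in the definition~\eqref{eq_03-main-results:1} of $\phi$. The price is a constant that depends only on the number of alternations between $V_1$ and $V_2$, and this number is controlled by $|V_1|$. First I restate~\eqref{cond-mixing} in an equivalent unconditional form: it suffices to show that for all $A\in\history(V_1)$ and $B\in\history(V_2)$,
\begin{equation*}
  \bigl|\P(A\cap B)-\P(A)\P(B)\bigr|\le \tfrac12 M(|V_1|)\,\phi\bigl(d(V_1,V_2)\bigr)\,\P(B).
\end{equation*}
Taking $B=\{\P(A\/\history(V_2))-\P(A)>0\}$ and then its complement gives the almost sure bound. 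I also use the functional form of $\phi$-mixing. For an interval $I$ (the past) and an interval $J$ lying at distance $n$ to its right (the future), and for bounded $g$ that is $\history(J)$-measurable,
\begin{equation*}
  \bigl|\esp{g\/\history(I)}-\esp{g}\bigr|\le 2\phi(n)\|g\|_\infty .
\end{equation*}
Together with the set form $|\P(B'\cap A')-\P(B')\P(A')|\le \phi(n)\P(A')$, with $A'$ in the past and $B'$ in the future, these are the only mixing inputs.

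Next I decompose. Write $V_1=\{a_1<\dots<a_r\}$ with $r=|V_1|$. The points of $V_1$ cut $V_2$ into at most $r+1$ blocks $W_0,\dots,W_r$, where $W_j$ lies strictly between $a_j$ and $a_{j+1}$ (with $a_0=-\infty$ and $a_{r+1}=+\infty$). Every block is at distance at least $n:=d(V_1,V_2)$ from $V_1$. Ordering everything along $\N$ gives an alternating chain
\begin{equation*}
  W_0,\ \{a_1\},\ W_1,\ \{a_2\},\ \dots,\ \{a_r\},\ W_r ,
\end{equation*}
in which consecutive $V_1$-pieces and $V_2$-pieces are separated by gaps of at least $n$. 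I then argue by induction on $r$, by induction on the rightmost element. Let $a_r$ be the last point of $V_1$. With $\history_{<}$ denoting the $\sigma$-field of everything to the left of $a_r$ (that is $V_1\setminus\{a_r\}$ and $W_0,\dots,W_{r-1}$), I plan to compare $\P(A\cap B)$ with the quantity in which the pair $(X_{a_r},W_r)$ is decoupled. This is done in two moves. First, $W_r$ is future with respect to $\history_{<}\vee\sigma(X_{a_r})$, so conditioning on the past costs $\phi(n)$ times the probability of the conditioning event, and that event contains the $V_2$ part. Second, $X_{a_r}$ is future with respect to $\history_{<}$ and at distance $\ge n$ from $W_{r-1}$, which again costs $\phi(n)$ in the good direction. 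After these two moves one is left with a configuration with $r-1$ points of $V_1$, to which the induction hypothesis applies. This yields a recursion of the form $M(r)\le C\,M(r-1)+C'$, hence a finite non-decreasing $M$, for instance exponential in $r$. That is allowed because the theorem only asks for some function of $|V_1|$.

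The step I expect to be the main obstacle is the asymmetry of $\phi$-mixing. The error must always be proportional to $\P(B)$ with $B\in\history(V_2)$, but $V_2$ sits on both sides of the points of $V_1$. When the conditioning event is a past event, the definition gives the bound with the right factor. When the $V_2$-event lies in the future, a naive application produces a factor $\P(A)$ instead of $\P(B)$, or only a $\phi^{1/2}$ covariance bound, and neither is acceptable here. To handle this I would first keep $B=B_0\cap\dots\cap B_r$ with $B_j\in\history(W_j)$, since rectangles of this form generate $\history(V_2)$. I would then always condition on the left-hand piece that contains the $V_2$ information, so that the error is multiplied by the probability of a $V_2$-cylinder. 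Finally I would extend from rectangles to all of $\history(V_2)$ by a monotone class argument applied to the signed measure $B\mapsto\P(A\cap B)-\P(A)\P(B)-c\,\P(B)$, which keeps its sign under countable disjoint unions. If this ordering trick fails at some step, the fallback is to accept a factor $2^{r}$ coming from decomposing into at most $2^{r+1}$ sign patterns; this factor is still absorbed into $M(|V_1|)$.
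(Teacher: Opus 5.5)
\textbf{There is a genuine gap, at exactly the asymmetry step you flagged, and no reordering can close it.} Your reduction to the unconditional form and the rectangle/monotone-class extension are fine. The trouble is your first move. There the past event is $C=A\cap B_0\cap\dots\cap B_{r-1}$ and the future event is $B_r$, and \eqref{eq_03-main-results:1} only gives
$|\P(C\cap B_r)-\P(C)\P(B_r)|\le\phi(n)\P(C)$.
You need an error of order $\phi(n)\P(B_0\cap\dots\cap B_r)$. The bound $\P(C)\le\P(B_0\cap\dots\cap B_{r-1})$ does not give this, because the factor $\P(B_r)$ is lost.

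Since $W_r$ lies to the right of every point of $V_1$, no ordering of the conditioning can put it on the conditioning (left) side. The $2^{r}$ fallback does not help either: what is missing is the unbounded factor $1/\P(B_r)$, not a constant. Already for $V_1=\{0\}$, $V_2=\{n\}$, your target $|\P(A\cap B)-\P(A)\P(B)|\lesssim\phi(n)\P(B)$, with $A\in\history(\{0\})$ and $B\in\history(\{n\})$, is time-reversed $\phi$-mixing. That is not implied by the forward coefficient.

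\textbf{In fact the statement is false with $\phi$ as in \eqref{eq_03-main-results:1}.} Take the following construction.
\begin{itemize}
\item Let $(\xi_t)_{t\in\mathbf{Z}}$ be i.i.d.\ fair bits.
\item Independently, let $(J_t)_{t\in\mathbf{Z}}$ be i.i.d.\ with $\P(J_t=j)=2^{-j}$ for $j\ge1$.
\item Put $X_t=4\xi_t+2\xi_{t-J_t}+2^{-J_t}-c$, where $c$ is the mean.
\end{itemize}
This encoding is injective, so $\history(\{t\})=\sigma(\xi_t,J_t,\xi_{t-J_t})$. The sequence is strictly stationary, centered and bounded.

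First, $\phi$ is small. Let $\mathcal{G}=\sigma(\xi_s,J_s,\ s\le k)$, which contains $\history(\{0,\dots,k\})$. Replace $(\xi_s)_{s\le k}$ by an independent copy. This produces a vector with the law of $(X_{k+n},\dots,X_{k+n+l})$, independent of $\mathcal{G}$. It differs from the true block only on $E=\{J_t\ge t-k \text{ for some } t\ge k+n\}$, and $E$ is independent of $\mathcal{G}$. Hence $|\P(B\/\mathcal{G})-\P(B)|\le\P(E)\le\sum_{i\ge n}2^{1-i}=2^{2-n}$ for every future event $B$. So $\phi(n)\le 2^{2-n}$, and $X\in\mathcal{W}_{\delta,\phi}$ for every $\delta$ (one also checks that $\sigma^2>0$).

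Second, \eqref{cond-mixing} fails. On $D=\{J_n=n\}\in\history(\{n\})$, which has probability $2^{-n}>0$, the variable $X_n$ reveals $\xi_0$. So $\P(\xi_0=1\/\history(\{n\}))\in\{0,1\}$ on $D$. With $V_1=\{0\}$, $V_2=\{n\}$, the left-hand side of \eqref{cond-mixing} is therefore $\ge 1/2$ on $D$, while the right-hand side is at most $4M(1)2^{-n}\to0$.

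\textbf{What the paper does, and what would rescue your argument.} The paper gives no derivation: it quotes the inequality from its random-field reference, where conditions of this shape are usually imposed as the mixing hypothesis itself. So either \eqref{cond-mixing} must be assumed directly, or the coefficient must be strengthened. With a symmetric coefficient giving relative errors, such as $\psi$-mixing, each decoupling step costs a factor $1+O(\psi(n))$ of $\P(C)\P(B_r)$. The factor $\P(B_r)$ is then never lost, and an argument along your alternating chain does go through.
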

Given $\phi$ and $\delta$ as above, in what follows, $X$ is a fixed element of
$\mathcal{W}_{\delta,\phi}$.
We then require two technical results, which are the two cornerstones of
our subsequent analysis. Specifically, whenever the covariance inequality and
the maximal inequality are satisfied, we are in a position to derive a
convergence rate for Donsker's invariance principle. The first theorem can be
found for instance in~\cite{takahata1983central}.
\begin{theorem}[Covariance inequality]
  \label{lem-hypo} Let $X\in \mathcal{W}_{\delta,\phi}$. Let $r\in (1,+\infty]$ and let $s$ denote its conjugate
  exponent. Let
  $V_1, V_2 \in \mathcal{U}$. Assume that $Y\in \L^{r}\bigl(\history(V_{1})\bigr)$ and $Z\in \L^{s}\bigl(\history(V_{2})\bigr)$.
  Then, we have
  \begin{equation}\label{lem-cov}
    |\cov(Y, Z)|\leq 2 \ M^{\frac{1}{s}}(|V_1|)\ \phi^{\frac{1}{s}}\bigl(d(V_1,V_2)\bigr)\, \|Y\|_{\L^{r}}\, \|Z\|_{\L^{s}}.
  \end{equation}
\end{theorem}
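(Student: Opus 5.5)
The argument is the classical one of Ibragimov, adapted to the conditional form of the mixing coefficient given by Theorem~\ref{thm:nahapetian}. First we treat bounded $Z$, then extend by truncation. Throughout, set $\varepsilon = M(|V_1|)\,\phi\bigl(d(V_1,V_2)\bigr)$. By \eqref{cond-mixing}, for every $A\in\history(V_1)$,
\begin{equation*}
  \bigl|\P(A\/\history(V_2))-\P(A)\bigr|\le \varepsilon \quad\text{a.s.}
\end{equation*}
(If $\varepsilon\ge 1$ the claim is trivial from Hölder, since $|\cov(Y,Z)|\le 2\|Y\|_{\L^r}\|Z\|_{\L^s}$. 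So we may assume $\varepsilon<1$.)

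\textbf{Step 1: a conditional bound.} Write
\begin{equation*}
  \cov(Y,Z)=\esp{Z\bigl(\esp{Y\/\history(V_2)}-\esp{Y}\bigr)}.
\end{equation*}
For $Y$ a simple $\history(V_1)$-measurable variable, $Y=\sum_i y_i\indic_{A_i}$ with $(A_i)$ a partition, we have
\begin{equation*}
  \esp{Y\/\history(V_2)}-\esp{Y}=\sum_i y_i\,\bigl(\P(A_i\/\history(V_2))-\P(A_i)\bigr).
\end{equation*}
Set $\mu_\omega(A)=\P(A\/\history(V_2))(\omega)$, a regular conditional probability on $\history(V_1)$, and $\nu=\P$. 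Then the right-hand side equals $\int Y\,\dif(\mu_\omega-\nu)$.

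The key pointwise estimate is obtained by splitting $\mu_\omega-\nu$ into its positive and negative parts. Each part has total mass at most $\varepsilon$, because the setwise bound applies to the Hahn set (the supremum over $A$ controls the total variation up to the factor $1$ per part). Hölder's inequality with respect to each part, followed by the bound $\int|Y|^r\dif(\mu_\omega-\nu)^{\pm}\le \int|Y|^r\dif(\mu_\omega+\nu)$, gives
\begin{equation*}
  \Bigl|\int Y\,\dif(\mu_\omega-\nu)\Bigr|\le \varepsilon^{1/s}\Bigl(\bigl(\esp{|Y|^r\/\history(V_2)}\bigr)^{1/r}+\|Y\|_{\L^r}\Bigr).
\end{equation*}
This extends to general $Y\in\L^r$ by approximation with simple functions.

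\textbf{Step 2: integrate against $Z$.} Multiply the bound of Step 1 by $|Z|$, take expectations, and apply Hölder with exponents $(r,s)$. Using
\begin{equation*}
  \esp{\bigl(\esp{|Y|^r\/\history(V_2)}\bigr)^{1/r}|Z|}\le \|Y\|_{\L^r}\|Z\|_{\L^s}
\end{equation*}
(conditional Jensen and the tower property), each of the two terms contributes $\varepsilon^{1/s}\|Y\|_{\L^r}\|Z\|_{\L^s}$. Altogether,
\begin{equation*}
  |\cov(Y,Z)|\le 2\,\varepsilon^{1/s}\,\|Y\|_{\L^r}\|Z\|_{\L^s}=2\,M^{1/s}(|V_1|)\,\phi^{1/s}\bigl(d(V_1,V_2)\bigr)\,\|Y\|_{\L^r}\|Z\|_{\L^s},
\end{equation*}
which is \eqref{lem-cov}.

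\textbf{Limiting case $r=\infty$, $s=1$.} Here the bound reads $|\esp{Y\/\history(V_2)}-\esp{Y}|\le 2\varepsilon\|Y\|_\infty$ directly, since the total variation of $\mu_\omega-\nu$ is at most $2\varepsilon$. This gives the claim without any Hölder step.

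\textbf{Main obstacle.} The delicate point is the measure-theoretic justification in Step 1. One must have a regular conditional probability on $\history(V_1)$; this exists because $\history(V_1)$ is generated by finitely many real random variables, so it is countably generated. One must also ensure that the supremum bound \eqref{cond-mixing} holds simultaneously for all $A$ outside a single null set. This follows by restricting to a countable generating algebra and then extending by monotone class.
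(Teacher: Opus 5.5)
The paper does not prove this statement; it refers to Takahata. The standard proof of such $\phi$-mixing covariance bounds (Ibragimov, Billingsley) takes simple $Y=\sum_i y_i\indic_{A_i}$ and $Z=\sum_j z_j\indic_{B_j}$. It writes the covariance as a finite double sum of $\P(A_i\cap B_j)-\P(A_i)\P(B_j)$, applies H\"older to the finite sums, and then approximates. Your route is different and correct in substance. You start from the conditional form \eqref{cond-mixing} and view $\esp{Y\mid\history(V_2)}-\esp{Y}$ as the integral of $Y$ against a signed measure whose two Jordan parts each have mass at most $\varepsilon$. You then integrate against $Z$. This avoids the bookkeeping on partitions and makes clear why the exponent $1/s$ goes with $Z$, the variable measurable for the conditioning $\sigma$-field. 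The price is that you need regular conditional distributions and a single exceptional null set, which the elementary argument never uses.

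Two points need repair. First, countable generation does not by itself give a regular conditional probability on $\history(V_1)$. In the Dieudonn\'e--Doob counterexample the $\sigma$-field is generated by two real random variables: the identity on $[0,1]$ and the indicator of a non-measurable set. Work on the image space instead. By Doob--Dynkin, write $Y=h(\mathbf X)$ with $\mathbf X=(X_a)_{a\in V_1}$ and $h$ Borel. A regular conditional law $\kappa(\omega,\cdot)$ of $\mathbf X$ given $\history(V_2)$ exists on the Polish space $\R^{|V_1|}$. Apply \eqref{cond-mixing} to $A=\{\mathbf X\in B\}$, and use your countable-algebra and monotone-class argument to get $|\kappa(\omega,B)-\P(\mathbf X\in B)|\le\varepsilon$ for all Borel $B$ off one null set. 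Then run Steps 1--2 with $\mu_\omega=\kappa(\omega,\cdot)$ and $\nu$ the law of $\mathbf X$.

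Second, the domination $(\mu_\omega-\nu)^{\pm}\le\mu_\omega+\nu$ that you quote does not give your displayed bound. It only gives $2\varepsilon^{1/s}\bigl(\int|Y|^r\,\mathrm{d}(\mu_\omega+\nu)\bigr)^{1/r}$, hence the constant $2^{1+1/r}$ after Step 2. Use instead the sharper $(\mu_\omega-\nu)^{+}\le\mu_\omega$ and $(\mu_\omega-\nu)^{-}\le\nu$, which follow immediately from the Hahn decomposition. Alternatively, do a single H\"older step against $|\mu_\omega-\nu|$, which has mass at most $2\varepsilon$ and is dominated by $\mu_\omega+\nu$. This gives $(2\varepsilon)^{1/s}\bigl(\int|Y|^r\,\mathrm{d}(\mu_\omega+\nu)\bigr)^{1/r}$, and after Step 2 exactly $2^{1/s+1/r}\varepsilon^{1/s}\|Y\|_{\L^r}\|Z\|_{\L^s}=2\varepsilon^{1/s}\|Y\|_{\L^r}\|Z\|_{\L^s}$.
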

According to~\cite{wang2009}, we
have the following maximal inequality for  partial sums of a $\phi$-mixing sequence:
\begin{theorem}[Maximal inequality]
  \label{thm:mixing_wiener:maximal_inequality}
  Let $X\in \mathcal{W}_{\delta,\phi}$. For $k\ge 0$ and $m\ge 1$, let
  \begin{equation*}
    S_k(i)=\sum_{j=k}^{k+i-1} X_j.
  \end{equation*}
  For any $p\in [2,2+\delta]$ with $\delta>0$, we have
  \begin{equation*}
    \esp{\max_{1\le i\le m} |S_k(i)|^p}\lesssim\,  m^{p/2}\sup_{k\le i< k+m}\|X_i\|_{\L^{p}}^p.
  \end{equation*}
\end{theorem}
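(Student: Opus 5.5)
We prove the maximal inequality of Theorem~\ref{thm:mixing_wiener:maximal_inequality} in two stages. First we bound the moment of a single partial sum $\esp{|S_k(i)|^p}$. Then we pass to the maximum by a chaining (dyadic) argument of Móricz--Serfling type.

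\textbf{Stage one: second moments.} Fix $k$ and write $S=S_k(i)$. Expand
\begin{equation*}
  \esp{S^{2}}=\sum_{j,l}\cov(X_j,X_l).
\end{equation*}
Apply Theorem~\ref{lem-hypo} with $r=s=2$, $V_1=\{j\}$ and $V_2=\{l\}$, so that $M(|V_1|)=M(1)$ is a constant. This gives
\begin{equation*}
  |\cov(X_j,X_l)|\le 2M(1)^{1/2}\phi(|j-l|)^{1/2}\|X_j\|_{\L^2}\|X_l\|_{\L^2}.
\end{equation*}
Summing over $j,l$ and using $\sum_n\phi(n)^{1/2}<\infty$, we obtain $\esp{S^2}\lesssim i\sup\|X_j\|_{\L^2}^2$.

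\textbf{Stage one: $p$-th moments.} For $p\in(2,2+\delta]$, we use a Rosenthal-type inequality for $\phi$-mixing sequences:
\begin{equation*}
  \esp{|S|^p}\lesssim \bigl(\esp{S^2}\bigr)^{p/2}+\sum_j\esp{|X_j|^p}.
\end{equation*}
I would prove this by the standard induction on the length, splitting $S$ into two halves separated by a gap $g$ (Ibragimov's approach). One writes $|a+b|^p\le |a|^p+|b|^p+C(|a|^{p-1}|b|+|a||b|^{p-1})$. The cross terms are handled in one of two ways: either by conditioning, using Theorem~\ref{thm:nahapetian} (which yields $\esp{|b|^{\cdot}\mid\history(V_1)}$ close to $\esp{|b|^{\cdot}}$ up to $\phi(g)$), or by Theorem~\ref{lem-hypo} with the exponents $r=p/(p-1)$, $s=p$. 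The $\phi$-mixing coefficient is applied between the two halves; since it is uniform in the size of the second set, the constant is not affected. The gap contributes at most $g$ terms, which are bounded crudely. Since $\sum_j\esp{|X_j|^p}\le i\sup\|X_j\|_p^p\le i^{p/2}\sup\|X_j\|_p^p$, we get
\begin{equation*}
  \esp{|S_k(i)|^p}\le C\, i^{p/2}\sup_{k\le j<k+i}\|X_j\|_{\L^p}^p.
\end{equation*}
The constant $C$ is uniform in $k$ and $i$. Making it uniform requires the induction to close, i.e.\ choosing the gap $g$ so that $\phi(g)$ is small enough. I expect this to be the main obstacle: keeping $C$ from blowing up under the recursion. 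What I would try first is to allow $C$ to depend on $\phi$ and $p$, and to choose $g$ with $2^{p/2-1}(1+c\phi(g)^{1/p})<2^{p/2}$-type slack, which is available because $p>2$.

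\textbf{Stage two: the maximum.} Define $g(k,i)=C\,i\,\sup_{k\le j<k+i}\|X_j\|_p^2$. This function is superadditive in the sense $g(k,i)+g(k+i,l)\le g(k,i+l)$, up to replacing the sup by a fixed global sup over $[k,k+m)$, which is harmless since the bound only involves that sup. Stage one then reads $\esp{|S_k(i)|^p}\le g(k,i)^{p/2}$ with exponent $p/2>1$. Móricz's maximal inequality (proved by dyadic bisection: bound the maximum over $[k,k+m)$ by the maximum over the left half plus $|S|$ of the left half plus the maximum over the right half, and apply Minkowski recursively) then gives
\begin{equation*}
  \esp{\max_{i\le m}|S_k(i)|^p}\le C_p\, g(k,m)^{p/2}.
\end{equation*}
The constant $C_p$ is finite precisely because $p/2>1$, so the geometric series $\sum_\ell 2^{-\ell(p/2-1)/p}$ converges. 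This is exactly the asserted bound $m^{p/2}\sup_{k\le i<k+m}\|X_i\|_{\L^p}^p$.
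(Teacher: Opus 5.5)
The paper gives no proof of this statement; it quotes it from \cite{wang2009}, a Rosenthal-type maximal inequality for $\phi$-mixing sequences. Your route is a self-contained alternative: Ibragimov's blocking induction for $\esp{|S_k(i)|^p}$, followed by M\'oricz's bisection argument to pass to the maximum. Stage two is correct for $p\in(2,2+\delta]$.

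\textbf{The case $p=2$ is not proved.} The statement includes $p=2$, but there the exponent $p/2$ equals $1$, and the bisection only gives $\esp{\max_{1\le i\le m}S_k(i)^2}\lesssim m(\log m)^2$. This is easy to repair. By Lyapunov's inequality and the case $p=2+\delta$,
$\esp{\max_{1\le i\le m}S_k(i)^2}\le\esp{\max_{1\le i\le m}|S_k(i)|^{2+\delta}}^{2/(2+\delta)}\lesssim m\sup_{j}\|X_j\|_{\L^{2+\delta}}^2$.
Weak stationarity gives $\|X_j\|_{\L^2}^2=\Gamma_0$ for every $j$. So the right-hand side is at most a constant depending only on $X$ times $m\sup_{k\le j<k+m}\|X_j\|_{\L^2}^2$, which the convention for $\lesssim$ allows. (If $\Gamma_0=0$, then $X\equiv 0$.)

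\textbf{Stage one needs two corrections before the induction closes.}

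First, Theorems~\ref{lem-hypo} and~\ref{thm:nahapetian} carry the factor $M(|V_1|)$. Whichever half you call $V_1$ has cardinality of order $i$, and nothing in the paper bounds $M$. This factor would therefore enter every step of the recursion and destroy the uniformity of $C$. Your remark about uniformity in the size of the second set does not address it. Use instead the classical Ibragimov inequality. It follows directly from~\eqref{eq_03-main-results:1}, because the supremum there runs over left and right blocks of all lengths. If $Y\in\L^{r}$ is measurable with respect to a block of indices and $Z\in\L^{s}$ with respect to a block at distance $d$ to its right, then $|\cov(Y,Z)|\le 2\phi(d)^{1/r}\|Y\|_{\L^{r}}\|Z\|_{\L^{s}}$.

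Second, your slack condition involves only $\phi(g)$, as if every cross term were a covariance remainder. Under the hypothesis $\sup_k\esp{|S_k(n)|^p}\le Kn^{p/2}$, the remainders are indeed $O(\phi(g)^{1/p}Kn^{p/2})$. But decoupling leaves the products $\esp{|a|^{p-1}}\esp{|b|}$ and $\esp{|a|}\esp{|b|^{p-1}}$. These survive even under independence and are of order $n^{p/2}$. Indeed, $\esp{|a+b|^p}\le 2\bigl(1+c\,\phi(g)^{1/p}\bigr)\max\bigl(\esp{|a|^p},\esp{|b|^p}\bigr)$ already fails for i.i.d.\ Gaussian summands, where the left side equals $2^{p/2}\esp{|a|^p}$.

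What closes the recursion is your own second-moment estimate $\esp{|b|}\le\|b\|_{\L^2}\lesssim n^{1/2}$, whose constant does not involve $K$. With it the products become $O(K^{1-1/p}n^{p/2})$, so that $\esp{|a+b|^p}\le Kn^{p/2}\bigl(2+cK^{-1/p}+c\,\phi(g)^{1/p}\bigr)$. Since $2<2^{p/2}$, first fix $g$ with $\phi(g)$ small, then take $K$ large. Large $K$ also absorbs the base case and the $O(g)$ contribution of the gap block in $\L^{p}$ norm, which completes the induction.

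With these changes your argument proves the statement for $p\in(2,2+\delta]$, and the case $p=2$ follows as above.
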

Here and hereafter, the expression $a\lesssim b$ means that there exists a
constant $K>0$, independent of $k$, $m$ and $n$, such that $a\le Kb$.

To quantify the rate of convergence, we need an assumption on the speed at which
$\phi(n)$ goes to zero. In what follows, this assumption is always in force.
\begin{definition}
  Let
  \begin{equation*}
    \rho= \frac{1+\delta}{2+\delta}\in \Bigl(\frac{1}{2},1\Bigr).
  \end{equation*}
  For $\theta>0$ such that $\rho\theta>2$, we denote by
  $\mathcal{W}_{\delta,\phi}^{\theta}$ the set of random sequences
  $X\in \mathcal{W}_{\delta,\phi}$ which satisfy
  \begin{equation}
    \label{eq_03-main-results:2}\phi(n)\le n^{-\theta},\qquad n\ge 1.
  \end{equation}
\end{definition}
Note that $\rho\theta>2$ implies $\theta>2$. Since the sequence $X$ is weakly stationary in $\L^{2}$, we can define
\begin{align*}
  \Gamma_{j}=\esp{X_{n}X_{n+j}}=\esp{X_{0}X_{j}}, \text{ for any
                                                  } n\in \N,\ j\in \N.
\end{align*}
\begin{lemma}
  Let $X\in \mathcal{W}_{\delta,\phi}^{\theta}$. Then, $\Gamma$ belongs to
  $\ell^{1/2}(\N)$ and
  \begin{equation*}
    \sigma^{2}=\Gamma_{0}+2\sum_{j\ge 1}\Gamma_{j}\in [0,+\infty).
  \end{equation*}
  Throughout the paper, we assume that $\sigma^{2}>0$.
\end{lemma}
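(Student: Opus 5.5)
Since $X_0\in\L^{2+\delta}$ and $X_j\in\L^{2+\delta}$, I will bound each $\Gamma_j$ with the covariance inequality (Theorem~\ref{lem-hypo}) applied to $V_1=\{0\}$ and $V_2=\{j\}$, so that $d(V_1,V_2)=j$ and $|V_1|=1$. I take $Y=X_0$ and $Z=X_j$. I choose $s=2+\delta$, so $Z\in\L^{s}(\history(V_2))$, and $r=(2+\delta)/(1+\delta)\le 2+\delta$, so $Y\in\L^{r}$ because $\L^{2+\delta}\subset\L^{r}$ on a probability space. Both variables are centered, hence $\cov(X_0,X_j)=\Gamma_j$. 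The inequality gives
\begin{equation*}
  |\Gamma_j|\le 2\,M(1)^{1/s}\,\phi(j)^{1/s}\,\|X_0\|_{\L^{r}}\|X_j\|_{\L^{s}}\lesssim \phi(j)^{1/(2+\delta)}\le j^{-\theta/(2+\delta)},\qquad j\ge 1.
\end{equation*}
The constant is uniform in $j$ by \eqref{C11_ineq}.

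I then use the freedom in choosing which variable carries the larger exponent. Swapping roles, $Y=X_0\in\L^{2+\delta}$ with $r=2+\delta$, puts the weight $\phi^{1/s}$ with $s=(2+\delta)/(1+\delta)$, i.e.\ $\phi^{\rho}$. This is legitimate because $X_j\in\L^{2+\delta}\subset\L^{s}$. It yields the sharper bound
\begin{equation*}
  |\Gamma_j|\lesssim \phi(j)^{\rho}\le j^{-\rho\theta}.
\end{equation*}
This is where the assumption $\rho\theta>2$ is used. Then $|\Gamma_j|^{1/2}\lesssim j^{-\rho\theta/2}$ with $\rho\theta/2>1$, so $\sum_j|\Gamma_j|^{1/2}<\infty$, i.e.\ $\Gamma\in\ell^{1/2}(\N)$. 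Since $\ell^{1/2}\subset\ell^1$ (terms eventually below $1$, so $|\Gamma_j|\le|\Gamma_j|^{1/2}$), the series defining $\sigma^2$ converges absolutely and $\sigma^2$ is finite.

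For nonnegativity, I use weak stationarity to write
\begin{equation*}
  \frac1q\esp{\Bigl(\sum_{j=0}^{q-1}X_j\Bigr)^2}=\Gamma_0+2\sum_{j=1}^{q-1}\Bigl(1-\frac jq\Bigr)\Gamma_j.
\end{equation*}
The left side is nonnegative. The right side converges to $\sigma^2$ by dominated convergence, since $\Gamma\in\ell^1$. Hence $\sigma^2\in[0,\infty)$.

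The only delicate point is the bookkeeping of exponents in Theorem~\ref{lem-hypo}. One must put the $\L^{2+\delta}$ norm on $Y$ so that the mixing coefficient appears with power $\rho$ rather than $1/(2+\delta)$. The latter would require the much stronger condition $\theta>2(2+\delta)$.
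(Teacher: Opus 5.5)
Your proof is correct and follows the paper's argument. The covariance inequality, with the $\L^{2+\delta}$ norm on one factor and the conjugate exponent $(2+\delta)/(1+\delta)$ on the other, gives $|\Gamma_j|\lesssim\phi(j)^{\rho}\le j^{-\rho\theta}$, and $\rho\theta>2$ then yields $\Gamma\in\ell^{1/2}(\N)$. Your Fej\'er-type identity for $\frac1q\esp{(\sum_{j<q}X_j)^2}$ proves the nonnegativity of $\sigma^2$, which the paper only quotes as well known.
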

\begin{proof}
  According to~\eqref{lem-cov} and~\eqref{C11_ineq}, we have
  \begin{align*}
    \left|\Gamma_{j}\right| & \lesssim \left(\phi(j)\right)^{\frac{1 + \delta}{2 + \delta}} \ \|X_{0}\|_{\L^{(2+\delta)/(1+\delta)}}\, \|X_{j}\|_{\L^{2+\delta}} \\
                            & \lesssim  j^{-\rho\theta}.
  \end{align*}
  Since $\rho\theta>2$, $\Gamma$ belongs to $\ell^{1/2}(\N)$; moreover, it is
  well known that the sum of the covariances of a weakly stationary sequence is
  non-negative.
\end{proof}

\section{Convergence rate}
\label{sec:convergence-rate}
Since we deal with dyadic partitions, we slightly change the notation
compared to Section~\ref{sec:motivations}. The random walk associated with
the sequence $(X_k,\, k\ge 0)$ is defined by
\begin{equation*}
  X^n(t)=2^{n/2}\ \sum_{k=0}^{2^{n}-1} X_k \, r_k^n(t) = \sum_{k=0}^{2^{n}-1} X_k \, h_{k}^n(t)
\end{equation*}
where
\begin{equation*}
  \label{eq_donsker:11}  r_k^n(t)=\int_0^t \indic_{[k2^{-n},\ (k+1)2^{-n})}(s)\dif s \text{ and }
  h_{k}^n(t)=2^{n/2}\ r_k^n(t).
\end{equation*}
We recall from~\cite{coutin2020stein} that
\begin{equation*}
  \|h_{k}^n\|_{W_{\eta,p}}\lesssim 2^{-n(\frac{1}{2}-\eta)}.
\end{equation*}
\subsection{Affine interpolation}
\label{sec:Affine interpolation}

For $m\ge 1$, we denote by $\pi_{m}$ the orthogonal projection onto
$\operatorname{span}(h_{l}^{m},\, l=0,\dots,2^{m}-1)$ with respect to the
scalar product $(f,g)\longmapsto \int_{0}^{1}f'(s)g'(s)\dif s$. It
corresponds to the affine interpolation on the regular subdivision of
step~$2^{-m}$. For $m<n$,
we have
\begin{equation*}
  \pi_{m}X^{n}=  \sum_{j=0}^{2^{m}-1}\left(\frac{1}{2^{(n-m)/2}}\sum_{l=j2^{n-m}}^{(j+1)2^{n-m}-1}X_{l} \right)h_{j}^{m}.
\end{equation*}
Although the random variables $X_{i}$ are not independent,
Theorem~\ref{thm:mixing_wiener:maximal_inequality} yields the same upper bound
on the modulus of continuity of  $X^{n}$ as in \cite{Coutin2020}:
\begin{theorem}
  \label{thm:majo_var}
  Let $X\in \mathcal{W}_{\delta,\phi}$. For any  $p\in [2,2+\delta]$ with $\delta>0$,
  we have:
  \begin{equation*}
    \sup_{n\ge 1}\ \sup_{\substack{s,t\in [0,1]\\ s\neq t}}\ \frac{\esp{\bigl|X^{n}(t)-X^{n}(s)\bigr|^{p}}}{|t-s|^{p/2}}<\infty.
  \end{equation*}
\end{theorem}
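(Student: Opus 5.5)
Fix $n\ge 1$ and $0\le s<t\le 1$, and set $N=2^{n}$. The process $X^{n}$ is affine on each interval $[kN^{-1},(k+1)N^{-1}]$ with slope $N^{1/2}X_{k}$. Writing $s\in[aN^{-1},(a+1)N^{-1})$ and $t\in[bN^{-1},(b+1)N^{-1})$, I would split the argument into two cases, according to whether $s$ and $t$ lie in the same interval or not.

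\emph{Case $a=b$.} Here $X^{n}(t)-X^{n}(s)=N^{1/2}X_{a}(t-s)$. Since $|t-s|\le N^{-1}$, we have $N^{1/2}|t-s|\le |t-s|^{1/2}$. Therefore
\begin{equation*}
  \esp{|X^{n}(t)-X^{n}(s)|^{p}}\le |t-s|^{p/2}\,\sup_{k}\|X_{k}\|_{\L^{p}}^{p}.
\end{equation*}
The right-hand side is finite by~\eqref{C11_ineq}, because $p\le 2+\delta$.

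\emph{Case $a<b$.} I decompose the increment as
\begin{equation*}
  X^{n}(t)-X^{n}(s)=\bigl(X^{n}((a+1)N^{-1})-X^{n}(s)\bigr)+N^{-1/2}\sum_{k=a+1}^{b-1}X_{k}+\bigl(X^{n}(t)-X^{n}(bN^{-1})\bigr).
\end{equation*}
By convexity, $|x+y+z|^{p}\le 3^{p-1}(|x|^{p}+|y|^{p}+|z|^{p})$, so it suffices to bound each term separately.

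The two boundary terms are increments within a single interval. By the first case, each is bounded by a constant times a distance that is at most $|t-s|$, hence by a constant times $|t-s|^{p/2}$.

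For the middle term, I apply Theorem~\ref{thm:mixing_wiener:maximal_inequality} with $k=a+1$ and $m=b-a-1$, keeping only the last term $i=m$ of the maximum. This gives
\begin{equation*}
  \esp{\Bigl|N^{-1/2}S_{a+1}(b-a-1)\Bigr|^{p}}\lesssim N^{-p/2}(b-a-1)^{p/2}\sup_{i}\|X_{i}\|_{\L^{p}}^{p}.
\end{equation*}
The sum runs over complete intervals contained in $[s,t]$, so $(b-a-1)N^{-1}\le t-s$. The right-hand side is therefore $\lesssim |t-s|^{p/2}$; if $b=a+1$ the middle term is simply zero.

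The constants in both cases depend only on $p$, on $\sup_{k}\|X_{k}\|_{\L^{2+\delta}}$ (via $\|X_k\|_{\L^p}\le\|X_k\|_{\L^{2+\delta}}$) and on the constant of the maximal inequality. None of these depends on $n$, $s$ or $t$, which gives the uniform bound in the statement. The only delicate point is that the implicit constant in Theorem~\ref{thm:mixing_wiener:maximal_inequality} must be uniform in the starting index $k$ and in $m$; this is exactly what the $\lesssim$ convention stated after that theorem provides. Once that is granted, the argument is elementary bookkeeping.
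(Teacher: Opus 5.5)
Your proof is correct and follows essentially the same route as the paper. Both arguments split the case analysis at the scale $2^{-n}$, use a trivial moment bound for increments inside a single interval, and apply Theorem~\ref{thm:mixing_wiener:maximal_inequality} to the sum over the complete intervals between $s$ and $t$. Your explicit peeling-off of the two boundary increments is in fact slightly more careful than the paper. The paper applies the maximal inequality directly to the weighted sum $\sum_{i}X_{i}\bigl(h_{i}^{n}(t)-h_{i}^{n}(s)\bigr)$, even though its two boundary weights differ from the interior weight $2^{-n/2}$, whereas the maximal inequality is stated only for unweighted partial sums.
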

\begin{proof}
  If $|t-s|\le 2^{-n}$, there are at most two values of $k$ such that
  $r_k^n(t)-r_k^n(s)$ is not zero. Furthermore, we have
  \begin{equation*}
    |h_k^n(t)-h_k^n(s)| \le 2^{n/2}|t-s|
  \end{equation*}
  hence
  \begin{equation*}
    |h_k^n(t)-h_k^n(s)|^2\le 2^{n}|t-s|^{2}\le |t-s|.
  \end{equation*}
  According to Theorem~\ref{thm:mixing_wiener:maximal_inequality} with $m=2$,
  we have
  \begin{equation*}
    \esp{\Bigl|\sum_{i=0}^{2^{n}-1} X_i \,\Bigl(h_i^n(t)-h_i^n(s)\Bigr)\Bigr|
      ^{p}}\lesssim   |t-s|^{p/2}.
  \end{equation*}
  If $|t-s|>2^{-n}$, there are at most $\lfloor 2^{n}|t-s|\rfloor+2$ values of $k$ such that
  $r_k^n(t)-r_k^n(s)$ is not zero. Furthermore, for these values of~$k$, we have
  \begin{equation*}
    |h_k^n(t)-h_k^n(s)| \le 2^{-n/2}.
  \end{equation*}
  Apply Theorem~\ref{thm:mixing_wiener:maximal_inequality} with
  $m=\lfloor 2^{n}|t-s|\rfloor+2$ to get
  \begin{equation*}
    \esp{\Bigl|\sum_{i=0}^{2^{n}-1} X_i \,\Bigl(h_i^n(t)-h_i^n(s)\Bigr)\Bigr|
      ^{p}}\lesssim |t-s|^{p/2}.
  \end{equation*}
  The result follows.
\end{proof}
We are then in a position to reproduce the proof given in~\cite{coutin2020stein} of the
following theorem.
\begin{theorem}
  \label{thm:mixing_wiener:convergence}
  With the same hypotheses as above, for any $\eta<1/2$ and any $p>1/\eta$,
  we have
  \begin{equation*}
    \sup_{n\ge m\ge 1} 2^{m( 1/2-\eta )}\esp{\|X^{n}-\pi_m (X^{n})\|_{W_{\eta,p}}^p}^{1/p}<\infty.
  \end{equation*}
\end{theorem}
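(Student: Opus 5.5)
Write $\Delta=X^{n}-\pi_m X^{n}$. Because $\pi_m$ is the affine interpolation on the dyadic grid of step $2^{-m}$, $\Delta$ vanishes at every grid point $j2^{-m}$. On each interval $I_j=[j2^{-m},(j+1)2^{-m}]$ we then have
\[
\Delta(t)=\bigl(X^n(t)-X^n(j2^{-m})\bigr)-2^{m}(t-j2^{-m})\bigl(X^n((j+1)2^{-m})-X^n(j2^{-m})\bigr),
\]
so every increment $\Delta(t)-\Delta(s)$ is a combination of increments of $X^n$ over distances at most $|t-s|$ or $2^{-m}$.

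The only probabilistic input is Theorem~\ref{thm:majo_var}, which gives $\esp{|X^n(t)-X^n(s)|^{p'}}\lesssim |t-s|^{p'/2}$ uniformly in $n$ for $p'\in[2,2+\delta]$. Since $X^n$ is affine on intervals of length $2^{-n}\le 2^{-m}$, we can derive two increment bounds for $\Delta$:
\[
\esp{|\Delta(t)-\Delta(s)|^{p}}\lesssim \min\bigl(|t-s|,2^{-m}\bigr)^{p/2}\ \text{ for } s,t\in I_j,
\qquad
\esp{|\Delta(t)|^{p}}\lesssim 2^{-mp/2}.
\]
The first follows from the moment bound applied to the increment of $X^n$ and to the chord term, using $2^m|t-s|\le 1$. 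The second follows because $\Delta$ vanishes at grid points. For $s,t$ in different intervals we use $|\Delta(t)-\Delta(s)|\le|\Delta(t)|+|\Delta(s)|$, which gives the bound $2^{-mp/2}$.

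Assume for now that these moments are available at exponent $p$. Fubini then gives
\[
\esp{\|\Delta\|_{W_{\eta,p}}^p}\lesssim 2^{-mp/2}+\iint_{|t-s|\le 2^{-m}}\frac{|t-s|^{p/2}}{|t-s|^{1+\eta p}}\dif s\dif t+\iint_{|t-s|>2^{-m}}\frac{2^{-mp/2}}{|t-s|^{1+\eta p}}\dif s\dif t,
\]
where the first term comes from the $\L^p$ part of the norm. The near-diagonal integral is $\lesssim 2^{-m(p/2-\eta p)}$ because $p/2-\eta p>0$. The off-diagonal integral is $\lesssim 2^{-mp/2}2^{m\eta p}$. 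Taking $p$-th roots yields $2^{-m(1/2-\eta)}$, uniformly in $n\ge m$; this is the computation of~\cite{coutin2020stein}.

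The main obstacle is the moment exponent. Theorem~\ref{thm:majo_var} only covers exponents up to $2+\delta$, whereas the statement requires $p>1/\eta$, which can be large. I would handle this by proving the estimate for an auxiliary exponent $p_0\in[2,2+\delta]$ and then transferring it to $p$ by interpolation. One route combines Sobolev-type embedding with the fact that $\Delta$ is piecewise affine at scale $2^{-n}$. Another bounds $\sup|\Delta|$ through a Garsia--Rodemich--Rumsey-type estimate in $W_{\eta',p_0}$ with $\eta'>\eta$, and then interpolates $|\Delta(t)-\Delta(s)|^p\le |\Delta(t)-\Delta(s)|^{p_0}(2\sup|\Delta|)^{p-p_0}$. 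If neither route closes, I would restrict the statement to $p\le 2+\delta$, which is presumably the intended hypothesis given ``with the same hypotheses as above''.
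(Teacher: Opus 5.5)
Your route is the paper's. The paper just invokes Theorem~\ref{thm:majo_var} and says the proof of \cite{coutin2020stein} can then be reproduced, and your Fubini computation, splitting the double integral at $|t-s|=2^{-m}$, is that reproduction. Two points need correcting.

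\textbf{Pairs straddling a grid point.} Take $s<t$ in adjacent intervals with $t-s\le 2^{-m}$. You only give these pairs the crude bound $2^{-mp/2}$, yet your near-diagonal integral uses $|t-s|^{p/2}$ for them. The crude bound would not suffice there. If $g$ is the grid point between $s$ and $t$, these pairs would contribute $2^{-mp/2}\int_0^{2^{-m}}w\cdot w^{-1-\eta p}\,dw=+\infty$, because $\eta p>1$. The fix is one line. Write $\Delta(t)-\Delta(s)=(\Delta(t)-\Delta(g))+(\Delta(g)-\Delta(s))$. Each piece lies in a single interval and $|t-g|,|g-s|\le t-s$, so $\esp{|\Delta(t)-\Delta(s)|^{p}}\lesssim|t-s|^{p/2}$ whenever $|t-s|\le 2^{-m}$.

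\textbf{The exponent.} Neither of your interpolation routes can close, because for $p>2+\delta$ the statement is false. Take $(X_k)$ i.i.d., centered, in $\L^{2+\delta}$ but not in $\L^{p}$. Since $\phi(j)=0$ for $j\ge1$, this sequence lies in $\mathcal{W}_{\delta,\phi}^{\theta}$ for every admissible $\theta$. For $n>m$ and $t\le 2^{-n}$,
\[
\Delta(t)=t\,2^{n/2}\Bigl((1-2^{m-n})X_{0}-2^{m-n}\sum_{1\le l<2^{n-m}}X_{l}\Bigr),
\]
and $X_0$ is independent of the sum. Hence $\esp{\|\Delta\|_{W_{\eta,p}}^{p}}\ge\esp{\int_{0}^{2^{-n}}|\Delta(t)|^{p}\,dt}=+\infty$. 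The same example rules out the Garsia--Rodemich--Rumsey route, since $\sup|\Delta|\ge|\Delta(2^{-n})|\notin\L^{p}$.

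So the restriction $1/\eta<p\le 2+\delta$, which forces $\eta>1/(2+\delta)$, is not a fallback but is necessary. It is exactly the range where the appeal to Theorem~\ref{thm:majo_var} is legitimate, and the paper's ``same hypotheses as above'' must be read as including it. With this restriction and the fix above, your proof is complete.
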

It is also well known (see~\cite{Friz2010}) that the same holds for Brownian motion.
\begin{theorem}
  \label{thm:eq_04-technical-lemmas-new:1}
  For any $\eta<1/2$ and any $p>1/\eta$,
  \begin{equation*}
    \sup_{m\ge 1} 2^{m( 1/2-\eta )}\esp{\|B-\pi_m (B)\|_{W_{\eta,p}}^p}^{1/p}<\infty.
  \end{equation*}
\end{theorem}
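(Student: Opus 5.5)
The plan is to treat the Brownian case with the same machinery as Theorem~\ref{thm:mixing_wiener:convergence}, using Gaussian moments in place of the maximal inequality.

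\emph{Step 1: Gaussian moment bound.} For every $q\ge 1$ we have
\begin{equation*}
  \esp{|B(t)-B(s)|^{q}}=c_{q}|t-s|^{q/2}.
\end{equation*}
This is the analogue of Theorem~\ref{thm:majo_var}, and it is trivial here.

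\emph{Step 2: reduction to a pointwise bound.} Write $e_m=B-\pi_m B$. By Fubini,
\begin{equation*}
  \esp{\|e_m\|_{W_{\eta,p}}^p}\lesssim \int_0^1\esp{|e_m(t)|^p}\dif t+\iint_{[0,1]^2}\frac{\esp{|e_m(t)-e_m(s)|^p}}{|t-s|^{1+\eta p}}\dif s\dif t .
\end{equation*}
The $\L^p$ part is immediate. For $t\in[k2^{-m},(k+1)2^{-m}]$, $e_m(t)$ is a Brownian bridge on that interval, hence Gaussian with variance at most $2^{-m}$. So $\esp{|e_m(t)|^p}\lesssim 2^{-mp/2}\le 2^{-mp(1/2-\eta)}$.

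\emph{Step 3: the double integral.} Split according to whether $|t-s|\le 2^{-m}$ or $|t-s|>2^{-m}$.

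\begin{itemize}
\item \textbf{Far case, $|t-s|>2^{-m}$.} Bound $|e_m(t)-e_m(s)|\le |e_m(t)|+|e_m(s)|$, so the expectation is at most $2^{-mp/2}$. Integrating $|t-s|^{-1-\eta p}$ over $|t-s|>2^{-m}$ gives $2^{m\eta p}$. The contribution is therefore $2^{-mp(1/2-\eta)}$.
\item \textbf{Near case, $|t-s|\le 2^{-m}$.} Write
\begin{equation*}
  e_m(t)-e_m(s)=B(t)-B(s)-\bigl(\pi_mB(t)-\pi_mB(s)\bigr).
\end{equation*}
The interpolation increment is bounded by the slope times $|t-s|$, and the slope is $2^{m}$ times an increment of $B$ over a step (at most two consecutive cells). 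This gives $\esp{|\cdot|^p}\lesssim 2^{mp/2}|t-s|^p\le |t-s|^{p/2}$. Hence $\esp{|e_m(t)-e_m(s)|^p}\lesssim|t-s|^{p/2}$. Integrating $|t-s|^{p/2-1-\eta p}$ over $|t-s|\le 2^{-m}$ converges because $\eta<1/2$, and yields $2^{-mp(1/2-\eta)}$.
\end{itemize}

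Taking $p$-th roots gives the claim. The condition $p>1/\eta$ is not needed for the estimate itself; it only ensures the space embeds into continuous functions, so that point evaluations and $\pi_m$ make sense.

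The main obstacle is the bookkeeping in the near-diagonal case when $s$ and $t$ lie in adjacent cells. There the slope of $\pi_m B$ changes across the cell boundary. I would handle this by inserting the grid point between $s$ and $t$ and applying the bound on each piece, using the triangle inequality in $\L^p$.
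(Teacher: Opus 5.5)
Your proof is correct. It differs from the paper only in that the paper gives no argument for this statement at all: it calls it well known and refers to~\cite{Friz2010}.

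You instead give a self-contained computation. The cellwise Brownian-bridge bound $\esp{|e_m(t)|^p}\lesssim 2^{-mp/2}$ handles both the $\L^p$ part and the region $|t-s|>2^{-m}$. The bound $\esp{|e_m(t)-e_m(s)|^p}\lesssim |t-s|^{p/2}$, obtained by splitting at the grid point and using $(2^m|t-s|)^{p/2}\le 1$, handles the region $|t-s|\le 2^{-m}$. Each region then contributes a term of order $2^{-mp(1/2-\eta)}$ because $0<\eta<1/2$.

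Besides being elementary, your argument does not really use Gaussianity. On a cell $[a,b]$ one has $e_m(t)=(Z(t)-Z(a))-\frac{t-a}{b-a}(Z(b)-Z(a))$, so everything runs on the single hypothesis $\esp{|Z(t)-Z(s)|^p}\lesssim|t-s|^{p/2}$. The paper's $\pi_m X^n$ is also the dyadic affine interpolation of $X^n$. So feeding Theorem~\ref{thm:majo_var} into the same computation in place of Brownian scaling gives the random-walk counterpart, Theorem~\ref{thm:mixing_wiener:convergence}, for $p\in[2,2+\delta]$. Your route thus makes both interpolation errors used in the final optimization come from one argument; the paper's citation only buys brevity.

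A minor point on your closing remark: $\pi_m B$ makes sense simply because $B$ has continuous paths. You are right that $p>1/\eta$ plays no role in this estimate; it matters only for the embedding of $W_{\eta,p}$ into H\"older continuous functions.
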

\subsection{Finite-dimensional convergence rate}
\label{sec:technical-lemmas}
\def\A{A}
\def\Astrong#1#2{{#1}^s(#2)}
\def\Aweak#1#2{{#1}^w(#2)}
\def\As#1#2#3{{#1}_{\vphantom{s}#2}^s(#3)}
\def\Aw#1#2#3{{#1}_{\vphantom{w}#2}^w(#3)}
\def\p{\partial}

Let $\A=\{0,\dots,2^{m}-1\}$. For $i\in \A$, we set
\begin{equation*}
  \A_{i}=\{i2^{n-m},\dots, (i+1)2^{n-m}-1\}.
\end{equation*}
We consider the $\ell^{1}$ norm on $\R^{2^{m}}$  defined as
\begin{equation*}
  \|(x_{0},\dots,x_{2^{m}-1})\|_{\ell^{1}}=\sum_{i\in \A}|x_{i}|.
\end{equation*}
We denote by $(e_{i},\, i\in \A)$ the canonical basis of $\R^{2^{m}}$.

\begin{theorem}
  \label{thm_mixing_core:reduction_finite_dimension}
  Let $X\in \mathcal{W}_{\delta}$.  For any $1\le m<n$, set
  \begin{equation*}
    \tau=2^{(m-n)/2}.
  \end{equation*}
  Then,   we have
  \begin{equation}
    \label{eq_clt_stable:1}\dist_{\KR( W_{\eta,p} )}\Big(\law( \pi_m(X^{n}) ),\ \law(\sigma \pi_m(B) )\Big)\lesssim 2^{-m(1/2-\eta)}\  \dist_{\KR( \R^{2^{m}} )}\Bigl(\law(\tau U_{m}^{n}),\ \law( B_{m})\Bigr),
  \end{equation}
  where for any $i\in \{0,\dots,2^{m}-1\}$,
  \begin{align*}
    U_{m,i}^{n} & =\sum_{j\in \A_{i}}X_{j}                                                                       \\
    B_{m,i}     & =\sigma 2^{m/2}\, \left[ B\left(\frac{i+1}{2^m}\right) - B\left(\frac{i}{2^m} \right) \right].
  \end{align*}
  and
  \begin{equation*}
    U_{m}^{n}=\sum_{i\in \A} U_{m,i}^{n}\ e_{i}\text{ and
    } B_{m}=\sum_{i\in \A}B_{m,i}\ e_{i}.
  \end{equation*}
\end{theorem}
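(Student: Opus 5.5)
The argument is the coupling computation sketched in the introduction, now done with the precise normalizations. First I identify the two interpolated processes as images of vectors under one fixed linear map. Define
\begin{equation*}
  T\, :\, \R^{2^{m}}\to W_{\eta,p},\qquad T(x_{0},\dots,x_{2^{m}-1})=\sum_{i\in \A}x_{i}\, h_{i}^{m}.
\end{equation*}
By the formula for $\pi_{m}X^{n}$ recalled at the beginning of Section~\ref{sec:Affine interpolation}, the coefficient of $h_{j}^{m}$ is $2^{-(n-m)/2}U^{n}_{m,j}=\tau U^{n}_{m,j}$, so $\pi_{m}X^{n}=T(\tau U_{m}^{n})$.

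For the Brownian side, $\pi_{m}$ is the affine interpolation at the points $i2^{-m}$. Since $h_{i}^{m}$ has slope $2^{m/2}$ on $[i2^{-m},(i+1)2^{-m})$ and is constant elsewhere, the coefficient of $h_{i}^{m}$ in $\pi_{m}(\sigma B)$ is $\sigma 2^{-m/2}\cdot 2^{m}\bigl(B((i+1)2^{-m})-B(i2^{-m})\bigr)=B_{m,i}$. Hence $\pi_{m}(\sigma B)=T(B_{m})$. I will double-check this normalization against the dyadic definition, since it is the only place where a constant could slip.

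Next I bound the Lipschitz constant of $T$ from $(\R^{2^{m}},\ell^{1})$ to $W_{\eta,p}$. By the triangle inequality and the recalled bound $\|h_{i}^{m}\|_{W_{\eta,p}}\lesssim 2^{-m(1/2-\eta)}$, uniformly in $i$,
\begin{equation*}
  \|T(x)-T(y)\|_{W_{\eta,p}}\le \sum_{i\in\A}|x_{i}-y_{i}|\,\|h_{i}^{m}\|_{W_{\eta,p}}\lesssim 2^{-m(1/2-\eta)}\|x-y\|_{\ell^{1}}.
\end{equation*}
The constant is uniform in $i$ because all $h_{i}^{m}$ are translates of the same ramp, up to the constant tail, and the estimate from~\cite{coutin2020stein} is uniform.

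To conclude, I use the fact that for any $L$-Lipschitz map $T$ between metric spaces, $\dist_{\KR}(T_{*}\mu,T_{*}\nu)\le L\,\dist_{\KR}(\mu,\nu)$. This is immediate from the sup definition: if $F\in \Lip_{1}(W_{\eta,p})$, then $F\circ T/L\in\Lip_{1}(\R^{2^{m}},\ell^{1})$. Equivalently, it follows from the coupling representation: take an optimal coupling $(\hat U,\hat B)$ for the $\ell^{1}$ distance, which exists since both laws have finite first moments (the $X_{j}$ are in $\L^{2}$), and push it forward through $T$. Applying this with $\mu=\law(\tau U_{m}^{n})$ and $\nu=\law(B_{m})$ gives \eqref{eq_clt_stable:1}.

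No step is a real obstacle; the only care points are the normalization identity $\pi_{m}(\sigma B)=T(B_{m})$ and the uniformity of the norm bound on $h_{i}^{m}$.
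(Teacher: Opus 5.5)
Your proof is correct and is essentially the paper's argument: both write $\pi_m X^n$ and $\sigma\pi_m B$ as the images of $\tau U_m^n$ and $B_m$ under the linear map $x\mapsto\sum_i x_i h_i^m$, and both use that this map has Lipschitz constant at most $\sup_i\|h_i^m\|_{W_{\eta,p}}\lesssim 2^{-m(1/2-\eta)}$ from $(\R^{2^m},\ell^1)$ to $W_{\eta,p}$. The paper transfers this bound through an optimal $\ell^1$ coupling, whereas your main route (that $F\circ T/L\in\Lip_1$) gets the same inequality directly from the supremum definition, without needing an optimal coupling to exist.
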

\begin{proof}
  Using the equivalent characterization of the Wasserstein-1
  distance on $(\R^{2^{m}}, \ell^{1})$, we know that there exists a couple of
  random vectors $(\hat U_{m}^{n},\hat B_{m})$ built on the same probability
  space $(\hat{\Omega},\hat{\P})$ such that
  \begin{equation*}
    \dist_{\KR(\R^{2^{m}},\ell^{1})}\Bigl(\law(\tau U_{m}^{n}),\,\law( B_{m})\Bigr)=\hat{\mathbf E}\left[\left\|\tau\hat U_{m}^{n}-\hat B_{m}\right\|_{\ell^{1}}\right].
  \end{equation*}
  Since any $F\in \Lip_{1}(W_{\eta,p})$ is $1$-Lipschitz continuous, this entails that:
  \begin{multline*}
    \dist_{\KR( W_{\eta,p} )}\Big(\law( \pi_m(X^{n}) ), \, \law( \sigma\pi_m(B) )\Big) \\
    \begin{aligned}
       & = \sup_{F \in \Lip_{1}(W_{\eta,p})}\left(\esp{ F\Big( \pi_m(X^{n})\Big)} - \esp{F\Big( \sigma\pi_m(B)\Big)}\right)                                                                                                      \\
       & = \sup_{F \in \Lip_{1}(W_{\eta,p})}\hat{\mathbf E}\left[ F\left(\tau\sum_{q=0}^{2^{m}-1}\hat U_{m,q}^{n} h_{q}^{m}\right)\right]- \hat{\mathbf E}\left[ F\left(\sum_{q=0}^{2^{m}-1}\hat B_{m,q} h_{q}^{m}\right)\right] \\
       & \le  \dist_{\KR( \R^{2^{m}} )}\Bigl(\law( \tau U_{m}^{n} ),\ \law( B_{m} )\Bigr) \sup_{q\in\{0,\dots,2^{m}-1\}}\|h_{q}^{m}\|_{W_{\eta,p}}                                                                              \\
       & \lesssim 2^{-m(1/2-\eta)}  \dist_{\KR( \R^{2^{m}} )}\Bigl(\law( \tau U_{m}^{n} ),\ \law( B_{m} )\Bigr).                                                                                                                 
    \end{aligned}
  \end{multline*}
  The proof is thus complete.
\end{proof}
It is well known that we can represent the Kantorovitch--Rubinstein
distance  between any distribution~$\nu$ and a finite-dimensional Gaussian
distribution via the Stein--Dirichlet identity. We denote by $\nabla f$ (respectively $\Delta f$) the ordinary
gradient (respectively the Laplacian) of~$f$, a function
from $\R^{2^{m}}$ to $\R$, with enough regularity. The Ornstein--Uhlenbeck
semigroup associated with $\mu$, the centered Gaussian measure on $\R^{2^{m}}$
with covariance matrix $\sigma^{2}\operatorname{Id}$, is given by
(see \cite{Decreusefond2022})
\begin{equation}
  \label{eq_04-technical-lemmas-new:1}
  P_{t}f(x)=\int_{\R^{2^{m}}}f(e^{-t}x+\beta_{t}y)\dif \mu(y)
\end{equation}
where $\beta_{t}=\sqrt{1-e^{-2t}}$.  It is clear that
\begin{equation*}
  \lim_{t\to\infty}P_{t}f(x)=\int_{\R^{2^{m}}}f\dif \mu.
\end{equation*}
The infinitesimal generator of $(P_{t},\, t\ge 0)$ is given by
\begin{equation*}
  Lf(x)=\sp{x}{\nabla f(x)}_{\R^{2^{m}}}-\sigma^{2}\Delta f(x)
\end{equation*}
and satisfies
\begin{equation*}
  \frac{d}{dt}P_tf(x)=-LP_{t}f(x).
\end{equation*}
By the fundamental theorem of calculus, we have the following identity, which we
call the Stein--Dirichlet representation formula:
\begin{equation}
  \label{eq_mixing_core:4}\int_{\R^{2^{m}}}f\dif \nu-\int_{\R^{2^{m}}}f\dif \mu=\int_{\R^{2^{m}}}\int_{0}^{\infty}LP_{t}f(x)\dif t\dif \nu(x).
\end{equation}
Note that the function
\begin{equation*}
  \psi_{f}\, :\,  x\longmapsto -\int_{0}^{\infty}\left(P_{t}f(x)-\int_{\R^{2^{m}}}f\dif \mu\right)\dif t
\end{equation*}
is the solution of the so-called Stein equation $L\psi_{f}=f-\int f\dif \mu$.
Unfortunately, it is impossible to bound the third
derivative of $\psi_{f}$ (more precisely, the Lipschitz norm of its second-order
derivative) when $f$ is only $1$-Lipschitz. Indeed, since $f$ is at most
once differentiable, we have to resort to the Gaussian integration-by-parts
formula in order to differentiate $P_{t}f$: for any $k\ge 1$, for any $t>0$,
\begin{equation}
  \label{eq_mixing_core:5}\nabla^{(k)}(P_{t}f)(x)= e^{-t}\left( \frac{e^{-t}}{\beta_{t}}\right)^{k-1}\int \nabla f(e^{-t}x+\beta_{t}y)\otimes H_{k-1}(y)\dif \mu(y)
\end{equation}
where $H_{k}$ denotes the $k$-th tensorized Hermite polynomial, with $H_{0}=1$. The difficulty arises from the
fact that the function $t\mapsto \beta_{t}^{-k}$  is not integrable on $[0,1]$
for $k\ge 2$.
Thus, we rewrite  \eqref{eq_mixing_core:4} as
\begin{equation*}
  \int_{\R^{2^{m}}}f\dif \nu-\int_{\R^{2^{m}}}f\dif \mu= \int_{\R^{2^{m}}}\bigl(f(x)-P_\xi f(x)\bigr)\dif \nu(x)
  -\int_{\R^{2^{m}}}\int_{\xi}^{\infty}LP_{t}f(x)\dif t\dif \nu(x),
\end{equation*}
bound each term separately and then optimize with respect to~$\xi$.
Consequently,
for any other probability measure~$\nu$ on $\R^{2^{m}}$,
\begin{multline}
  \label{eq_mixing_core:2}\dist_{\KR}(\nu,\mu) \le \inf_{\xi \ge 0} \left[ \sup_{f\in \Lip_{1}(\R^{2^{m}})} \left|\int_{\R^{2^{m}}}(P_\xi f(x)-f(x))\dif \nu(x)\right|\right.\\ +
    \left.\sup_{f\in \Lip_{1}(\R^{2^{m}})} \left|\int_{\R^{2^{m}}}\int_\xi^{\infty} LP_{t}f(x)\dif t\dif \nu(x)\right|\right].
\end{multline}

For $\psi\, :\, \R^{2^{m}}\to \R$ and $k\ge 1$, $\nabla^{(k)}\psi$ belongs to
$(\R^{2^{m}})^{\otimes k}$, which we equip with the $\ell^{\infty}$ norm:
\begin{equation*}
  \|\nabla^{(k)}\psi\|_{\infty}=\sup_{x\in \R^{2^{m}}}\sup_{(i_{1},\dots,i_k)\in \A^{k}}\left|\p_{i_{1},\dots,i_{k}}^{(k)}\psi(x)\right|.
\end{equation*}
\begin{theorem}
  \label{thm:general_principle}
  Assume that the following
  two hypotheses hold:
  \begin{itemize}
    \item The function \(\mathfrak{n}\, :\, x\mapsto \|x\|_{\ell^{1}}\) is
          $\nu$-integrable. We set
          \begin{equation*}
            a=\int \mathfrak{n}(x)\dif \nu(x)+\int \|y\|_{\ell^{1}}\dif \mu(y).
          \end{equation*}
    \item There exists $b>0$, with $2b\le a$, such that for any
          $\psi$ regular enough, we have
          \begin{equation*}
            \int_{\R^{2^{m}}}|L\psi(x)|\dif \nu(x)\le b \sup\left(\|\nabla^{(i)} \psi\|_{\infty},\, i=1,2,3\right).
          \end{equation*}
  \end{itemize}
  Then,
  \begin{equation*}
    \dist_{\KR}(\nu,\mu)\lesssim  2b\left(1+\log\left(\frac{a}{2b}\right)\right).
  \end{equation*}
\end{theorem}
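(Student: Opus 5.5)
We start from the decomposition \eqref{eq_mixing_core:2}: for each $\xi\ge 0$ and each $f\in\Lip_1(\R^{2^m})$ we bound the two terms separately, and then optimize over $\xi$.

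\emph{First term.} We use the Mehler representation \eqref{eq_04-technical-lemmas-new:1}. Since $f$ is $1$-Lipschitz for $\ell^1$,
\begin{equation*}
|P_\xi f(x)-f(x)|\le \int \|(e^{-\xi}-1)x+\beta_\xi y\|_{\ell^1}\dif\mu(y)\le (1-e^{-\xi})\|x\|_{\ell^1}+\beta_\xi\int\|y\|_{\ell^1}\dif\mu(y).
\end{equation*}
Both $1-e^{-\xi}$ and $\beta_\xi$ are at most $\sqrt{2\xi}$ (and at most $1$). Integrating against $\nu$ gives the bound $\lesssim a\,\sqrt{\xi}$. This is too weak for a logarithmic result, so we must refine the split. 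We will therefore replace $\sqrt\xi$ by exploiting that the $\beta_\xi$ part averages out against $\mu$. The alternative is to choose $\xi$ so small that $a\sqrt{\xi}\lesssim b$, i.e. $\xi\approx (b/a)^2$, and then check that the second term contributes only a logarithm of $1/\xi$. This alternative is what produces $\log(a/2b)$, since $\log(1/\xi)=2\log(a/b)$. So we take $\xi=(2b/a)^2\le 1$, which gives a first term $\lesssim 2b$.

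\emph{Second term.} We apply the second hypothesis with $\psi=P_tf$ for each fixed $t\ge\xi$, using Fubini:
\begin{equation*}
\Bigl|\int\int_\xi^\infty LP_tf\dif t\dif\nu\Bigr|\le \int_\xi^\infty \int|LP_tf|\dif\nu\dif t\le b\int_\xi^\infty \max_{k=1,2,3}\|\nabla^{(k)}P_tf\|_\infty\dif t.
\end{equation*}
By \eqref{eq_mixing_core:5}, each entry of $\nabla^{(k)}P_tf$ is bounded by $e^{-t}(e^{-t}/\beta_t)^{k-1}\|\nabla f\|_{\infty}\int |H_{k-1}(y)_{i_1\dots}|\dif\mu(y)$. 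A $1$-Lipschitz function for $\ell^1$ has $\ell^\infty$-bounded gradient, so $|\partial_i f|\le 1$. The Hermite moments are bounded independently of the dimension, because each entry of the tensor $H_{k-1}$ is a product of at most $k-1$ one-dimensional Hermite polynomials (up to powers of $\sigma$). This step is the crux: it is where the $\ell^\infty$ choice of tensor norm matters, and I expect verifying dimension-free constants to be the main obstacle. We then get
\begin{equation*}
\max_{k\le3}\|\nabla^{(k)}P_tf\|_\infty\lesssim e^{-t}\bigl(1+\beta_t^{-2}\bigr).
\end{equation*}
For $t\ge 1$ this is integrable with a bounded integral. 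For $t\in[\xi,1]$ we have $\beta_t^2\ge c\,t$, so the integral is $\lesssim\int_\xi^1 t^{-1}\dif t=\log(1/\xi)$. Hence the second term is $\lesssim b\,(1+\log(1/\xi))$.

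\emph{Optimization.} With $\xi=(2b/a)^2$, admissible because $2b\le a$, the total is
\begin{equation*}
\lesssim 2b+b\bigl(1+2\log(a/2b)\bigr)\lesssim 2b\bigl(1+\log(a/2b)\bigr).
\end{equation*}
Both terms are uniform in $f$, so taking the supremum over $f$ concludes the proof. A minor technical point is that $P_tf$ needs to be regular enough for the second hypothesis when $t>0$; \eqref{eq_mixing_core:5} provides this smoothness for every $t>0$.
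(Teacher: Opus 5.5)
Your proof is correct and follows the paper's route. You use the same split \eqref{eq_mixing_core:2}, the same Mehler bound $a\sqrt{1-e^{-\xi}}\lesssim a\sqrt{\xi}$ for the first term (contrary to your aside, this is not too weak; it is exactly what the paper uses), and the same derivative estimates from \eqref{eq_mixing_core:5}, which, integrated over $[\xi,\infty)$, give a second term of order $b\log(1/\xi)$. The only difference is cosmetic: you plug in $\xi=(2b/a)^2$ directly, whereas the paper sets $u=\sqrt{1-e^{-\xi}}$ and minimizes $au-2b\log u$ exactly at $u=2b/a$; both give $2b\bigl(1+\log(a/2b)\bigr)$ up to a constant.
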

\begin{proof}
  According to~\eqref{eq_04-technical-lemmas-new:1}, we have
  \begin{equation*}
    \int_{\R^{2^m}}|P_\xi f(x)-f(x)|\dif \nu (x)\le \int_{\R^{2^m}}\int_{\R^{2^{m}}}|f(e^{-\xi}x+\sigma\beta_{\xi}y)-f(x)|\dif \mu(y)\dif \nu(x).
  \end{equation*}
  Since $f$ is Lipschitz continuous, we have
  \begin{align*}
    \int_{\R^{2^m}}|P_\xi f(x)-f(x)|\dif \nu (x) & \lesssim (1-e^{-\xi})\int_{\R^{2^m}}\|x\|_{\ell^{1}}\dif \nu (x)+\beta_{\xi}\int_{\R^{2^m}}\|y\|_{\ell^{1}}\dif \mu (y) \\
                                                 & \lesssim a\, \sqrt{1-e^{-\xi}}.
  \end{align*}
  We deduce from~\eqref{eq_mixing_core:5} that for $f\in \Lip_{1}(\R^{2^{m}})$,
  for any $\xi>0$, for any $(i,j,k)\in \A^{3}$, we have
  \begin{align*}
    \|\p_{i}\, P_{\xi}f\|_{\infty}    & \lesssim e^{-\xi}                               \\
    \|\p_{ij}^{2} P_{\xi}f\|_{\infty} & \lesssim \frac{e^{-\xi}}{\beta_{\xi}}           \\
    \|\p_{ijk}^{3}P_{\xi}f\|_{\infty} & \lesssim \frac{e^{-3\xi}}{\beta_{\xi}^{2}}\cdot
  \end{align*}
   Since $\beta_{\xi}^{-2}\le (1-e^{-\xi})^{-1}$, this means that
  \begin{equation}
    \label{eq_mixing_core:3}
    \sup_{(i,j,k)\in \A^{3}}\left( \|\p_{i}\, P_{\xi}f\|_{\infty},\, \|\p_{ij}^{2} P_{\xi}f\|_{\infty},\,  \|\p_{ijk}^{3}P_{\xi}f\|_{\infty}\right)\lesssim \frac{e^{-\xi}}{1-e^{-\xi}}\cdot
  \end{equation}
  Hence, by virtue of the second hypothesis, we have
  \begin{equation*}
    \left|\int_{\R^{2^{m}}}\int_\xi^{\infty} LP_{t}f(x)\dif t\dif \nu(x)\right|\lesssim b \int_{\xi}^{\infty}\frac{e^{-t}}{1-e^{-t}}\dif t.
  \end{equation*}
  Thus, we have to minimize
  \begin{equation*}
    a (1-e^{-\xi})^{1/2}+b \int_{\xi}^{\infty} \frac{e^{-t}}{1-e^{-t}}\dif t.
  \end{equation*}
  Since $\int_{\xi}^{\infty}(1-e^{-t})^{-1}e^{-t}\dif t=-\log(1-e^{-\xi})$, setting
  $u=\sqrt{1-e^{-\xi}}\in (0,1)$, this amounts to minimizing $u\mapsto au-2b\log u$
  over $(0,1)$. The minimum is attained at $u=2b/a$ and is equal to
  \begin{equation*}
    2b\left(1+\log\left(\frac{a}{2b}\right)\right).
  \end{equation*}
  The proof is thus complete.
\end{proof}
\begin{theorem}
  \label{thm:P_xi_minus_f}
  For any $\xi\ge 0$, for any $m\le n$,  we have
  \begin{equation}
    \label{eq_mixing_core:14}
    \sup_{f\in \Lip_{1}(\R^{2^{m}})} \esp{|P_{\xi}f(\tau U_{m}^{n})-f(\tau U_{m}^{n})|}\lesssim 2^{m}\sqrt{1-e^{-\xi}}.
  \end{equation}
  In other words, we can take $a=2^{m}$ in Theorem~\ref{thm:general_principle}.
\end{theorem}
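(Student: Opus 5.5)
We follow the first step of the proof of Theorem~\ref{thm:general_principle}, now with $\nu=\law(\tau U_{m}^{n})$; the only new input is a moment bound on the $\ell^{1}$ norm of $\tau U_{m}^{n}$. Fix $f\in\Lip_{1}(\R^{2^{m}})$, where the Lipschitz property is taken with respect to $\ell^{1}$. By \eqref{eq_04-technical-lemmas-new:1} and the Lipschitz property,
\begin{equation*}
  |P_{\xi}f(x)-f(x)|\le \int_{\R^{2^{m}}}\bigl\|(e^{-\xi}-1)x+\beta_{\xi}y\bigr\|_{\ell^{1}}\dif\mu(y)\le (1-e^{-\xi})\|x\|_{\ell^{1}}+\beta_{\xi}\int_{\R^{2^{m}}}\|y\|_{\ell^{1}}\dif\mu(y).
\end{equation*}
Since $1-e^{-\xi}\le\sqrt{1-e^{-\xi}}$ and $\beta_{\xi}=\sqrt{1-e^{-2\xi}}\le\sqrt2\,\sqrt{1-e^{-\xi}}$, it is enough to show
\begin{equation*}
  \esp{\|\tau U_{m}^{n}\|_{\ell^{1}}}\lesssim 2^{m}
  \quad\text{and}\quad
  \int\|y\|_{\ell^{1}}\dif\mu(y)\lesssim 2^{m}.
\end{equation*}
Both bounds must hold uniformly in $m\le n$ and $\xi$.

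The Gaussian term is immediate. Under $\mu$ each coordinate is $\mathcal N(0,\sigma^{2})$, so
\begin{equation*}
  \int\|y\|_{\ell^{1}}\dif\mu(y)=2^{m}\sigma\sqrt{2/\pi}.
\end{equation*}

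For the other term, note that $\|\tau U_{m}^{n}\|_{\ell^{1}}=\sum_{i\in\A}\tau|U_{m,i}^{n}|$, so it suffices to show that $\tau\,\esp{|U_{m,i}^{n}|}$ is bounded uniformly in $i$, $m$ and $n$. Each $U_{m,i}^{n}$ is a partial sum $S_{k}(2^{n-m})$ with $k=i2^{n-m}$. Apply Theorem~\ref{thm:mixing_wiener:maximal_inequality} with exponent $p=2$, which is admissible since $2\in[2,2+\delta]$, and block length $2^{n-m}$:
\begin{equation*}
  \esp{|U_{m,i}^{n}|^{2}}\le\esp{\max_{1\le j\le 2^{n-m}}|S_{k}(j)|^{2}}\lesssim 2^{n-m}\sup_{l}\|X_{l}\|_{\L^{2}}^{2}.
\end{equation*}
The supremum is finite by \eqref{C11_ineq}. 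By Cauchy--Schwarz, $\esp{|U_{m,i}^{n}|}\lesssim 2^{(n-m)/2}=\tau^{-1}$, hence $\tau\,\esp{|U_{m,i}^{n}|}\lesssim 1$. Summing over the $2^{m}$ indices gives $\esp{\|\tau U_{m}^{n}\|_{\ell^{1}}}\lesssim 2^{m}$.

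Combining the two bounds yields \eqref{eq_mixing_core:14}. It also shows that the quantity $a$ in Theorem~\ref{thm:general_principle} is $\lesssim 2^{m}$, which justifies taking $a=2^{m}$ there.

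The step that needs care is the uniform second-moment bound for the block sums. It relies on the maximal inequality holding uniformly in the starting index $k$, and on the constant in $\lesssim$ being independent of $k$, $m$ and $n$, as stated after Theorem~\ref{thm:mixing_wiener:maximal_inequality}. The alternative is to expand the variance directly, which would need the summability of $\Gamma$ from the lemma on $\sigma^{2}$. Everything else is elementary.
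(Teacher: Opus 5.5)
Your proposal is correct and follows the same route as the paper. You bound $|P_\xi f-f|$ through the Lipschitz property by $(1-e^{-\xi})\|x\|_{\ell^1}+\beta_\xi\int\|y\|_{\ell^1}\dif\mu(y)$, and you get $\esp{\|\tau U_m^n\|_{\ell^1}}\lesssim 2^m$ from the maximal inequality (Theorem~\ref{thm:mixing_wiener:maximal_inequality}) applied block by block. Beyond that you only make explicit what the paper leaves implicit: the choice $p=2$ followed by Cauchy--Schwarz, the Gaussian first moment $2^m\sigma\sqrt{2/\pi}$, and the inequality $\beta_\xi\le\sqrt2\,\sqrt{1-e^{-\xi}}$.
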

\begin{proof}
  As a consequence of the maximal inequality (see
  Theorem~\ref{thm:mixing_wiener:maximal_inequality}), for any
  $f\in \Lip_{1}(\R^{2^{m}},\ell^{1})$, we get:
  \begin{equation*}
\esp{|P_{\xi}f(\tau U_{m}^{n})-f(\tau U_{m}^{n})|}\lesssim \sqrt{1-e^{-\xi}}\
    \esp{\|\tau U_{m}^{n}\|_{\ell^{1}}} + 2^{m}\beta_{\xi}
    \lesssim 2^{m}\sqrt{1-e^{-\xi}}.
\end{equation*}
  The proof is thus complete.
\end{proof}
Besides $m$, we introduce an additional parameter $k$, to be fixed later on but
assumed to satisfy
\begin{equation}
  \label{eq_mixing_core:13}k< n-m.
\end{equation}
For $M\subset \{0,\dots,2^{n}-1\}$ and $j\in \{0,\dots,2^{n}-1\}$, we define
the sets
\begin{equation*}
  \Aweak{M}{j}=\{q\in M,\, |q-j|>2^{k} \} \text{ and }
  \Astrong{M}{j}=\{q\in M,\, |q-j|\le 2^{k}\},
\end{equation*}
which correspond respectively to the indices of the random variables $X_{q}$
which are weakly (respectively strongly) dependent on $X_{j}$.
It may be useful to keep in mind Figure~\ref{fig:sets}, which shows how these
subsets are defined. We also need the following notation: for any
$M\subset \{0,\dots,2^{n}-1\}$,
\begin{align*}
  U_{M}          & =\sum_{i\in M}X_{i}                                         \\
  U              & =\sum_{i\in \A} U_{\A_{i}}\ e_{i}                           \\
  \Astrong{U}{j} & = \sum_{i\in \A} U_{\As{\A}{i}{j}}\ e_{i}                   \\
  \Aweak{U}{j}   & =\sum_{i\in \A} U_{\Aw{\A}{i}{j}}\ e_{i}                    \\
  \Aweak{U}{j,l} & =\sum_{i\in \A} U_{\Aw{\A}{i}{j}\cap \Aw{\A}{i}{l}}\ e_{i}.
\end{align*}
\begin{figure}[!ht]
  \centering
  \begin{tikzpicture}[font=\fontsize{6}{6}\selectfont,scale=0.4]
    \pgfmathsetmacro{\N}{8}       
    \pgfmathsetmacro{\NN}{16}
    \pgfmathsetmacro{\K}{1}       
    \pgfmathsetmacro{\i}{3}       
    \pgfmathsetmacro{\l}{5}       

    \draw[step=1, gray!20, very thin] (-\N/2,-\N/2) grid (1.5*\N,1.5*\N);
    \draw[very thin,color=blue!50] (-\N/2,0)  -- (1.5*\N,0);
    \draw[very thin,color=blue!50] (-\N/2,\N) -- (1.5*\N,\N);
    \draw[very thin,color=blue!50] (0,-\N/2) -- (0,1.5*\N);
    \draw[very thin,color=blue!50] (\N,-\N/2) -- (\N,1.5*\N);
    \draw[dashed, very thin, color=red!50,<->] (0,-\N/4)-- (\N,-\N/4) node[midway,below] {\(A_i\)};
    \draw[dashed, very thin, color=red!50,<-] (\N,-\N/4)-- (1.5*\N,-\N/4) node[midway,below] {\(A_{i+1}\)};
    \draw[dashed, very thin, color=red!50,->] (-\N/2,-\N/4)-- (0,-\N/4) node[midway,below] {\(A_{i-1}\)};
    \draw[dashed, very thin, color=red!50,<->] (-\N/4,0)-- (-\N/4,\N) node[midway,left] {\(A_i\)};
    \draw[dashed, very thin, color=red!50,<-] (-\N/4,\N)-- (-\N/4,1.5*\N    ) node[midway,left] {\(A_{i+1}\)};
    \draw[dashed, very thin, color=red!50,->] (-\N/4,-\N/2)-- (-\N/4,0) node[midway,left] {\(A_{i-1}\)};

    \foreach \x in {0,...,\N} {
        \foreach \y in {0,...,\N} {
            \filldraw[gray!50] (\x,\y) circle (2pt);
          }
      }

    \draw[very thin,red!50] (-\N/2,-\K-1) -- (1.5*\N-\K-1,1.5*\N);
    \draw[very thin,red!50] (-\K-1,-\N/2) -- (1.5*\N,1.5*\N-\K-1);
    \foreach \i in {0,...,\N} {
        \pgfmathsetmacro{\lower}{max(0,\i-2^\K)}
        \pgfmathsetmacro{\upper}{min(\N,\i+2^\K)}
        \foreach \j in {\lower,...,\upper} {
            \filldraw[red!60,opacity=0.5] (\i,\j) circle (4pt);
          }
      }

    \foreach \j in {0,...,\N} {
        \pgfmathparse{abs(\i-\j) > 2^\K ? 1 : 0}
        \ifnum\pgfmathresult>0
          \filldraw[blue] (\i,\j) circle (4pt);
        \fi
      }
    \foreach \j in {9,...,12} {
        \filldraw[blue!30] (\i,\j) circle (4pt);
      }
    \foreach \j in {-4,...,-1} {
        \filldraw[blue!50] (\i,\j) circle (4pt);
      }

    \foreach \j in {0,...,\N} {
        \pgfmathparse{abs(\i-\j) <= 2^\K ? 1 : 0}
        \ifnum\pgfmathresult>0
          \filldraw[red] (\i,\j) circle (4pt);
        \fi
      }

    \node at (-0.5,-0.5) {0};
    \node at (\N+0.5,-0.5) {\(2^{n-m}\)};
    \node at (-0.5,\N+0.5) {\(2^{n-m}\)};
    \node at (-0.5,\K+1.5) {\(2^k\)};
    \node at (\i,-0.5) {\(j\)};
    \node at (\l,-0.5) {\(l\)};
    \draw[thick,color=purple,opacity=0.5,<->] (\N,\N-\K-1) --node[midway,right,color=purple] {\(2.2^k\)} (\N,\N+\K+1) ;

      \begin{scope}[xshift=\N+10cm, yshift=0cm]
      \filldraw[red] (0,6) circle (4pt) node[right, xshift=0.5cm] {\(\As{\A}{i}{j}\)};
      \filldraw[blue] (0,5) circle (4pt) node[right, xshift=0.5cm] {\(\Aw{\A}{i}{j}\) };
      \filldraw[blue!30] (0,4) circle (4pt) node[right, xshift=0.5cm] {\(\Aw{\A}{i+1}{j}\) };
      \filldraw[blue!50] (0,3) circle (4pt) node[right, xshift=0.5cm] {\(\Aw{\A}{i-1}{j}\) };
    \end{scope}
  \end{tikzpicture}
  \caption{The different subsets of interest.}
  \label{fig:sets}
\end{figure}

We make two remarks of paramount importance for the sequel. Since $k<n-m$, for
any $i\in \A$, if $j\in \A_{i}$ then
\begin{equation}
  \label{eq_mixing_core:1}\kappa\notin\{i-1,i,i+1\}\Longrightarrow \Aw{\A}{\kappa}{j}=\A_{\kappa} \text{
    and } \As{\A}{\kappa}{j}=\emptyset.
\end{equation}
Furthermore, we have
\begin{equation}
  \label{eq_mixing_core:7}\card\{j\in \A_{i},\ \As{\A}{\kappa}{j}\neq \emptyset \text{ for
  }\kappa\neq i\}\le 2^{k+1}.
\end{equation}
The main decomposition is given by the next theorem.
For $g\, :\, \R^{2^{m}}\to \R^{2^{m}}$, we denote by $(g_{i},\, i\in \A)$ its
components:
\begin{equation*}
  g=\sum_{i\in \A}g_{i}\, e_{i}.
\end{equation*}
\def\X{U}
\begin{theorem}
  \label{thm_mixing_wiener:decomposition}
  Let $X\in \mathcal{W}_{\delta}$ and $\tau=2^{(m-n)/2}$.
  For any $k<n-m$, for any sufficiently regular function
  $g\, :\, \R^{2^{m}}\to \R^{2^{m}}$, we have
  \begin{multline*}
    \esp{\sp{\tau U}{g(\tau U)}_{\R^{2^{m}}}}\\
    \begin{aligned}
       & =\tau^{2}\sum_{i\in \A} \sum_{j\in \A_i} \sum_{l\in \As{\A}{i}{j}}\Gamma_{|j-l|}\esp{ \p_i g_{i}(\tau U) }                                                                                                                 \\
       & +\tau \sum_{i\in \A} \sum_{j\in \A_i} \esp{X_{j}\, g_{i}(\tau \Aweak{U}{j} )}                                                                                                                                              \\
       & + \tau^{2}\sum_{i\in \A} \sum_{j\in \A_{i}} \sum_{l\in \As{\A}{i}{j}} \esp{\zeta_{j,l}\, \p_{i}g_{i}\left(\tau {\Aweak{U}{j,l}}\right)}                                                                                    \\
       & +\tau^{2 }    \sum_{i\in \A} \sum_{j\in \A_{i}} \sum_{l\in \As{\A}{i}{j}} \int_{0}^{1}    \esp{\zeta_{j,l}\,
                                                                                                     \left\{\p_{i}g_{i}\left(\tau \Aweak{U}{j}+\tau r \Astrong{U}{j}\right)-\p_{i}g_{i}\left(\tau \Aweak{U}{j,l}\right)\right\}
                                                                                                   }   \dif r      \\
       & +\tau^{2}      \sum_{i\in \A} \sum_{j\in \A_{i}} \sum_{l\in \As{\A}{i}{j}} \Gamma_{|l-j|}\int_{0}^{1}
      \esp{\p_{i}g_{i}\left(\tau \Aweak{U}{j}+r\tau\Astrong{U}{j}\right)-\p_{i}g_{i}(\tau U) }\dif r                                                                                                                                \\
       & +\tau^{2 }  \sum_{\varepsilon=\pm 1}  \sum_{i\in \A} \sum_{j\in \A_{i}}\sum_{l\in \As{\A}{i+\varepsilon}{j}}\int_{0}^{1}\esp{X_{l}X_{j}\p_{i+\varepsilon}g_{i}\left(\tau \Aweak{U}{j} +\tau r \Astrong{U}{j}\right)}\dif r \\
       & =\sum_{q=1}^{6}T_{q}.
    \end{aligned}
  \end{multline*}
\end{theorem}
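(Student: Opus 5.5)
The identity is purely algebraic: it uses no mixing property, only stationarity of covariances and integrability. I would derive it by a leave-a-block-out Taylor expansion, applied separately to each summand $X_j$. Throughout, $\zeta_{j,l}$ denotes the centred product $\zeta_{j,l}=X_jX_l-\Gamma_{|j-l|}$, which is the reading under which the statement is consistent. I take $g$ of class $\mathcal C^1$ with bounded first derivatives, so that every expectation below is finite because $X\in\mathcal W_\delta\subset \L^2$.

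\textbf{Step 1 (splitting).} I start from
\begin{equation*}
\esp{\sp{\tau U}{g(\tau U)}_{\R^{2^{m}}}}=\tau\sum_{i\in\A}\sum_{j\in\A_i}\esp{X_j\,g_i(\tau U)}.
\end{equation*}
For a fixed $j\in\A_i$, the sets $\As{\A}{\kappa}{j}$ and $\Aw{\A}{\kappa}{j}$ partition $\A_\kappa$, so $U=\Aweak{U}{j}+\Astrong{U}{j}$. The fundamental theorem of calculus along $r\mapsto \tau\Aweak{U}{j}+r\tau\Astrong{U}{j}$ then gives
\begin{equation*}
g_i(\tau U)=g_i(\tau\Aweak{U}{j})+\tau\sum_{\kappa\in\A}U_{\As{\A}{\kappa}{j}}\int_0^1\p_\kappa g_i\bigl(\tau\Aweak{U}{j}+r\tau\Astrong{U}{j}\bigr)\dif r .
\end{equation*}
Multiplying by $\tau X_j$ and summing, the first term produces exactly $T_2$.

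\textbf{Step 2 (localisation).} By \eqref{eq_mixing_core:1}, which is where the hypothesis $k<n-m$ is used, $\As{\A}{\kappa}{j}=\emptyset$ unless $\kappa\in\{i-1,i,i+1\}$. So only three values of $\kappa$ survive. Expanding $U_{\As{\A}{\kappa}{j}}=\sum_{l\in\As{\A}{\kappa}{j}}X_l$ for $\kappa=i\pm1$ yields exactly $T_6$.

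\textbf{Step 3 (diagonal term).} It remains to treat
\begin{equation*}
\tau^2\sum_{i\in\A}\sum_{j\in\A_i}\sum_{l\in\As{\A}{i}{j}}\int_0^1\esp{X_jX_l\,\p_ig_i\bigl(\tau\Aweak{U}{j}+r\tau\Astrong{U}{j}\bigr)}\dif r .
\end{equation*}
I write $X_jX_l=\Gamma_{|j-l|}+\zeta_{j,l}$.
\begin{itemize}
\item In the $\Gamma$ part, I add and subtract $\esp{\p_ig_i(\tau U)}$. The subtracted piece does not depend on $r$ and gives $T_1$; the remainder is $T_5$.
\item In the $\zeta$ part, I add and subtract $\p_ig_i(\tau\Aweak{U}{j,l})$. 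The added piece is independent of $r$, so its $r$-integral is trivial and it gives $T_3$; the difference is $T_4$.
\end{itemize}
Summing $T_1,\dots,T_6$ then recovers the left-hand side.

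\textbf{Main obstacle.} There is no analytic difficulty here. The risk is in the bookkeeping: keeping the index sets straight (for instance that the $\kappa=i\pm1$ terms carry $\p_{i\pm1}g_i$ rather than $\p_ig_i$), and checking that the add-and-subtract steps introduce exactly the points $\tau U$ and $\tau\Aweak{U}{j,l}$ that appear in the statement. The choice of these intermediate points is deliberate. $\Aweak{U}{j,l}$ is built only from indices far from both $j$ and $l$, so that $T_3$ is later controllable through the covariance inequality, Theorem~\ref{lem-hypo}. Likewise, $T_2$ is of the form $\cov(X_j,g_i(\tau\Aweak{U}{j}))$ because $X$ is centred, which is also what makes it controllable by the same inequality.
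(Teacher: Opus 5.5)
Your proposal is correct and follows the paper's proof essentially step by step. You split off $g_i(\tau\Aweak{U}{j})$ to produce $T_2$, Taylor-expand the remainder using \eqref{eq_mixing_core:1} to separate the $\kappa=i$ term from the $\kappa=i\pm1$ terms (the latter giving $T_6$), and then add and subtract $\p_i g_i(\tau U)$ and $\p_i g_i(\tau\Aweak{U}{j,l})$ in the diagonal part to get $T_1,T_5$ and $T_3,T_4$. Your reading $\zeta_{j,l}=X_jX_l-\Gamma_{|j-l|}$ matches what the paper implicitly uses (it never defines $\zeta_{j,l}$), and your integrability remarks make precise its ``sufficiently regular'' assumption.
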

\begin{proof}
  For $i\in \A$, let
  \begin{align*}
    R^{i} & =\tau\esp{U_{i}g_{i}\left(\tau U\right)}                                                              \\
          & =\tau \sum_{j\in \A_{i}} \esp{X_{j}g_{i}\left(\tau U\right) }                                         \\
          & =\tau\sum_{j\in \A_{i}}  \esp{X_{j}g_{i}\left(\tau \Aweak{U}{j}\right)}
    +\tau\sum_{j\in \A_{i}} \esp{X_{j}\left(g_{i}\left(\tau U\right)-g_{i}\left(\tau \Aweak{U}{j}\right) \right)} \\
          & =\tau R^{i}_{1}+\tau R^{i}_{2}.
  \end{align*}
  Apply a Taylor expansion to $R^{i}_{2}$,
  remembering~\eqref{eq_mixing_core:1}, to obtain
  \begin{align*}
    R^{i}_{2} & =\tau \sum_{j\in \A_{i}} \sum_{l\in \As{\A}{i}{j}}\int_{0}^{1}\esp{X_{j}X_{l}\p_{i}g_{i}\left(\tau \Aweak{U}{j}+r\tau\Astrong{U}{j}\right)}\dif r \\
              & +\tau\sum_{\varepsilon=\pm 1} \sum_{j\in \A_{i}}  \sum_{l\in \As{\A}{i+\varepsilon}{j}}
    \int_{0}^{1}\esp{X_{j}X_{l}\p_{i+\varepsilon}g_{i}\left(\tau \Aweak{U}{j}+r\tau\Astrong{U}{j}\right)}\dif r                                                   \\
              & =\tau R^{i}_{21}+\tau R^{i}_{22}.
  \end{align*}
  We again use the add-and-subtract technique
  to get
  \begin{align*}
    R^{i}_{21} & = \sum_{j\in \A_{i}}\sum_{l\in \As{\A}{i}{j}}\Gamma_{|j-l|}\esp{\p_{i} g_{i}(\tau U)}                                                                                                                   \\
               & +\sum_{j\in \A_{i}}\sum_{l\in \As{\A}{i}{j}}\Gamma_{|j-l|}\int_{0}^{1}\esp{\p_{i}g_{i}\left(\tau \Aweak{U}{j}+r\tau \Astrong{U}{j}\right)-\p_{i}g_{i}(\tau U)}\dif r                                    \\
               & +\sum_{j\in \A_{i}}\sum_{l\in \As{\A}{i}{j}}\esp{\zeta_{j,l}\p_{i}g_{i}(\tau\Aweak{U}{j,l})}                                                                                                            \\
               & +\sum_{j\in \A_{i}}\sum_{l\in \As{\A}{i}{j}}\int_{0}^{1}\esp{\zeta_{j,l}\left(\p_{i}g_{i}\left(\tau \Aweak{U}{j}+r\tau \Astrong{U}{j}\right)-\p_{i}g_{i}\left(\tau \Aweak{U}{j,l}\right)\right)}\dif r.
  \end{align*}
  Finally, we have
  \begin{equation*}
    \esp{\sp{\tau U}{g(\tau U)}_{\R^{2^{m}}}}=\sum_{i\in \A} R^{i}.
  \end{equation*}
  It remains to substitute the new expression for \( R^{i}_{21} \) into that of~\( R^{i}_{2} \), and then into that of~\( R^{i} \).
\end{proof}

\begin{theorem}[Bound for $T_{1}$]
  \label{lem_mixing_wiener:trace2}
  Let $X\in \mathcal{W}_{\delta,\phi}$. For any $i\in \A$, we have
  \begin{equation*}
    \left|\tau^{2}\sum_{j\in \A_i} \sum_{l\in \As{\A}{i}{j}}\Gamma_{|j-l|}-\sigma^{2}\right|\lesssim 2^{k-n+m}+\sum_{j> 2^{k}}\phi(j)^{\rho}
  \end{equation*}
  and consequently,
  \begin{equation}
    \label{eq_mixing_wiener:2}\left|T_{1}-\sigma^{2}\sum_{i\in \A}\esp{\p_{i}g_{i}(\tau U)}\right|\lesssim \|\nabla g\|_{\infty}\left(2^{k-n+2m}+2^{m}\sum_{j> 2^{k}}\phi(j)^{\rho}\right).
  \end{equation}
  With the hypothesis at hand, this yields
  \begin{equation}
    \label{eq_mixing_core:10}
    \left|T_{1}-\sigma^{2}\sum_{i\in \A}\esp{\p_{i}g_{i}(\tau U)}\right|\lesssim \|\nabla g\|_{\infty}\left(2^{k-n+2m}+2^{m+k(1-\rho\theta)}\right).
  \end{equation}
\end{theorem}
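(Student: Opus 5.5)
We first bound, for a fixed block $i\in\A$, the quantity $\tau^{2}\sum_{j\in \A_i}\sum_{l\in \As{\A}{i}{j}}\Gamma_{|j-l|}$. Write $N=2^{n-m}=\tau^{-2}$ for the size of $\A_i$. For each $j\in\A_i$, the set $\As{\A}{i}{j}$ is $\{l\in\A_i,\ |l-j|\le 2^{k}\}$. Call $j$ an \emph{interior} index if $j$ is at distance more than $2^{k}$ from both endpoints of $\A_i$. For such $j$ we have
\begin{equation*}
  \sum_{l\in \As{\A}{i}{j}}\Gamma_{|j-l|}=\Gamma_{0}+2\sum_{q=1}^{2^{k}}\Gamma_{q}.
\end{equation*}
This differs from $\sigma^{2}$ by $2\sum_{q>2^{k}}\Gamma_{q}$. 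By the covariance inequality (Theorem~\ref{lem-hypo}) with $r=2+\delta$, exactly as in the proof of the lemma on $\Gamma$, we have $|\Gamma_q|\lesssim\phi(q)^{\rho}$. Hence this tail is $\lesssim\sum_{q>2^{k}}\phi(q)^{\rho}$.

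There are at most $2^{k+1}$ non-interior $j$, in the spirit of \eqref{eq_mixing_core:7}. For each of them the inner sum is bounded in absolute value by $\sum_{q\ge0}|\Gamma_q|<\infty$, which is finite since $\Gamma\in\ell^{1}$ under $\rho\theta>2$; in the merely $\phi$-mixing case, summability of $\phi^{\rho}$ is implied by $\sum\phi^{1/2}<\infty$ and $\rho>1/2$ with $\phi\le1$. We then average:
\begin{equation*}
  \tau^{2}\sum_{j\in\A_i}\sum_{l\in \As{\A}{i}{j}}\Gamma_{|j-l|}-\sigma^{2}
  =\frac1N\sum_{j\in\A_i}\Bigl(\sum_{l\in \As{\A}{i}{j}}\Gamma_{|j-l|}-\sigma^{2}\Bigr).
\end{equation*}
The interior terms contribute at most $\sum_{q>2^{k}}\phi(q)^{\rho}$. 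The boundary terms contribute at most $N^{-1}2^{k+1}\bigl(\sum_q|\Gamma_q|+\sigma^{2}\bigr)\lesssim 2^{k-n+m}$. This gives the first inequality.

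For \eqref{eq_mixing_wiener:2}, note that
\begin{equation*}
  T_{1}-\sigma^{2}\sum_{i}\esp{\p_i g_i(\tau U)}=\sum_{i\in\A}\Bigl(\tau^{2}\sum_{j}\sum_{l}\Gamma_{|j-l|}-\sigma^{2}\Bigr)\esp{\p_i g_i(\tau U)}.
\end{equation*}
Bound each $|\esp{\p_ig_i}|$ by $\|\nabla g\|_\infty$ and sum over the $2^{m}$ indices $i$. This multiplies the per-block bound by $2^{m}$.

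Finally, \eqref{eq_mixing_core:10} follows by inserting $\phi(j)\le j^{-\theta}$. The tail $\sum_{j>2^{k}}j^{-\rho\theta}$ is $\lesssim 2^{k(1-\rho\theta)}$ by comparison with an integral, valid since $\rho\theta>1$.

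The only delicate point is the boundary bookkeeping: making sure that the non-interior indices number $O(2^{k})$ and that their inner sums are uniformly bounded. This is where the absolute summability of $\Gamma$ is essential.
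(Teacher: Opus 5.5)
Your proof is correct and is essentially the paper's argument. The paper regroups the double sum over a block by lag $z$, using the exact count $2(2^{n-m}-z)$ of pairs at lag $0<z\le 2^{k}$, whereas you regroup it by rows $j$ (interior indices versus $O(2^{k})$ boundary ones); both isolate the same edge error $O(2^{k-n+m})$ and the same tail $\sum_{z>2^{k}}|\Gamma_{z}|$ before multiplying by $2^{m}\|\nabla g\|_{\infty}$.

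Your explicit check that $\sum_{q}\phi(q)^{\rho}\le\sum_{q}\phi(q)^{1/2}<\infty$ under mere $\phi$-mixing fills in a step the paper leaves implicit. The small constant slips are harmless: the boundary inner sums are bounded by $2\sum_{q}|\Gamma_{q}|$, and your strict convention gives up to $2^{k+1}+2$ boundary indices.
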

\begin{proof}
  Note that for any $i\in \A$,
  \begin{align*}
    \sum_{j\in \A_i} \sum_{l\in \As{\A}{i}{j}} \Gamma_{|j-l|}
     & =\sum_{j\in \A_0} \sum_{l\in \As{\A}{0}{j}} \Gamma_{|j-l|}                                   \\
     & =\sum_{z=0}^{2^{k}} \Gamma_{z}\ \card\left\{(j,l)\in \A_{0}\times \A_{0},\, |j-l|=z\right\}.
  \end{align*}
  It is clear that
  \begin{equation*}
    \card\left\{(j,l)\in \A_{0}\times \A_{0},\, |j-l|=z\right\}=
    \begin{cases}
      2^{n-m}      & \text{ if }z=0,           \\
      2(2^{n-m}-z) & \text{ if }0< z\le 2^{k}.
    \end{cases}
  \end{equation*}
  It follows that
  \begin{equation*}
    2(2^{n-m}-2^{k})\le \card\left\{(j,l)\in \A_{0}\times \A_{0},\, |j-l|=z\right\}\le 2\cdot 2^{n-m}.
  \end{equation*}
  Hence, we have
  \begin{align*}
    \left|\tau^{2} \sum_{j\in \A_0} \sum_{l\in \As{\A}{0}{j}}\Gamma_{|j-l|}-\sigma^{2}\right| & \lesssim 2^{k-n+m}\sum_{z=0}^{2^{k}}|\Gamma_{z}|+\sum_{z> 2^{k}}|\Gamma_{z}| \\
                                                                                              & \lesssim 2^{k-n+m} +\sum_{z>2^{k}}\phi(z)^{\rho}.
  \end{align*}
  We can now write
  \begin{align*}
    \left|T_{1}-\sigma^{2}\sum_{i\in \A}\esp{\p_{i}g_{i}(\tau U)}\right| & \lesssim \Bigl\|\sum_{i\in \A}\p_{i} g_{i}\Bigr\|_{\infty}\left|\tau^{2} \sum_{j\in \A_0} \sum_{l\in \As{\A}{0}{j}}\Gamma_{|j-l|}-\sigma^{2}\right| \\
                                                                         & \lesssim 2^{m}\|\nabla g\|_{\infty}\left|\tau^{2} \sum_{j\in \A_0} \sum_{l\in \As{\A}{0}{j}}\Gamma_{|j-l|}-\sigma^{2}\right|,
  \end{align*}
  and the result follows.
\end{proof}
\begin{theorem}[Bounds for $T_{2}$ and $T_{3}$]
  Let $X\in \mathcal{W}_{\delta,\phi}$.  With the same notations as above, we have:
  \begin{align}
    \label{eq_mixing_wiener:6}
    |T_{2}|                                   & \lesssim \|\nabla g\|_{\infty}\ \phi(2^{k})^{\frac{1+\delta}{2+\delta}}  \ 2^{\frac{3m}{2}+\frac{n}{2}} \\
    \label{eq_mixing_core:8}  |T_{3}| & \lesssim \|\nabla^{(2)}g\|_{\infty}\ \phi(2^{k})^{\frac{1+\delta}{2+\delta}}\  2^{k+2m}.
  \end{align}
\end{theorem}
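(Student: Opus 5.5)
Both bounds come from the covariance inequality of Theorem~\ref{lem-hypo}, applied with $r=(2+\delta)/(1+\delta)$ on the side carrying the function of the far-away variables and $s=2+\delta$ on the single variable $X_j$ (or the product $\zeta_{j,l}$). This produces exactly the factor $\phi(2^{k})^{(1+\delta)/(2+\delta)}$ once we note $d\ge 2^{k}$. I take $\zeta_{j,l}=X_jX_l-\Gamma_{|j-l|}$, which is centered; the definition is implicit in the decomposition of Theorem~\ref{thm_mixing_wiener:decomposition}.

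\emph{Bound for $T_{2}$.} For fixed $i\in\A$ and $j\in\A_i$, the vector $\Aweak{U}{j}$ is measurable with respect to $\history(V_1)$, where $V_1=\{q:|q-j|>2^{k}\}$. We have $d(\{j\},V_1)>2^{k}$. Since $\esp{X_j}=0$,
\begin{equation*}
\esp{X_j g_i(\tau\Aweak{U}{j})}=\cov\bigl(X_j,\, g_i(\tau\Aweak{U}{j})-g_i(0)\bigr).
\end{equation*}
Applying \eqref{lem-cov} with the roles swapped, so that the mixing constant is $M(|\{j\}|)=M(1)$, a fixed constant, gives
\begin{equation*}
|\cdot|\lesssim \phi(2^{k})^{\rho}\,\|X_j\|_{\L^{2+\delta}}\,\|\nabla g\|_{\infty}\,\tau\,\bigl\|\|\Aweak{U}{j}\|_{\ell^{1}}\bigr\|_{\L^{r}}.
\end{equation*}
Since $r<2$, the maximal inequality (Theorem~\ref{thm:mixing_wiener:maximal_inequality}) bounds each block sum in $\L^{r}\subset\L^2$ by $2^{(n-m)/2}$. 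Summing over the $2^{m}$ coordinates gives $\tau\,\|\,\|\Aweak{U}{j}\|_{\ell^1}\|_{\L^r}\lesssim 2^{m}$. Summing over the $2^{n}$ pairs $(i,j)$ and multiplying by the outer $\tau$ yields
\begin{equation*}
\tau\cdot 2^{n}\cdot 2^{m}\,\phi(2^k)^{\rho}=2^{\frac{3m}{2}+\frac{n}{2}}\phi(2^{k})^{\rho},
\end{equation*}
which is \eqref{eq_mixing_wiener:6}.

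\emph{Bound for $T_{3}$.} The argument is the same with $Z=\zeta_{j,l}$, which is $\history(\{j,l\})$-measurable and centered. Its $\L^{s}$ norm requires $X_jX_l\in\L^{2+\delta}$, which is more than the hypothesis gives, so I instead use $\|\partial_i g_i\|_\infty\le\|\nabla^{(2)}g\|_\infty$ only through boundedness. Write $\esp{\zeta_{j,l}Y}=\cov(X_jX_l,Y)$ with $Y=\p_ig_i(\tau\Aweak{U}{j,l})$ bounded by $\|\nabla g\|_\infty$. Then apply Theorem~\ref{lem-hypo} with $r=\infty$ on $Y$ and $s=1$, or with an intermediate split, bounding the product in $\L^{1+\delta/2}$ via Cauchy--Schwarz. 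The distance satisfies $d(\{j,l\},\Aw{\A}{}{j}\cap\Aw{\A}{}{l})>2^{k}$ and $M(2)$ is a constant. Counting gives $2^{m}$ blocks times $2^{n-m}$ indices $j$ times $2^{k+1}$ indices $l$, times $\tau^{2}=2^{m-n}$, so $2^{k+m}$ pairs in all. The extra factor $2^{m}$ in \eqref{eq_mixing_core:8} absorbs the $\ell^\infty$-versus-sum bookkeeping; I expect it to come from bounding $\p_ig_i$ through a Lipschitz estimate in $\ell^1$ over the $2^m$ coordinates when centering $Y$. The stated norm $\|\nabla^{(2)}g\|_\infty$ arises from writing $Y-\p_ig_i(0)$ and using the mean value theorem as in $T_2$. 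The moment count then uses $\esp{\|\tau\Aweak{U}{j,l}\|_{\ell^1}^r}^{1/r}\lesssim 2^{m}$, which accounts for the $2^{m}$.

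The main obstacle is the integrability in $T_3$. Pairing an $\L^{2+\delta}$ norm with $\zeta_{j,l}$ is impossible directly, so I would choose exponents $s=1+\delta/2$ for $\zeta_{j,l}$ and its conjugate $r$ on the smooth factor. This gives a power $\phi^{1/s}$ that is at least as good as $\phi^{\rho}$ when $\delta\le 2$ or so. If it is not, I would fall back on the bounded-$Y$ choice with $s=1$, since $\phi(2^{k})\le\phi(2^k)^\rho$ holds because $\phi\le1$.
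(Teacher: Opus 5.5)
\textbf{$T_2$.} Your bound for $T_2$ is the paper's argument. After your ``roles swapped'' step you apply Theorem~\ref{lem-hypo} with $Y=X_j$, $V_1=\{j\}$, $r=2+\delta$, and $Z=g_i(\tau U^{w}(j))-g_i(0)$ on the far set with $s=(2+\delta)/(1+\delta)$. You then use the maximal inequality through $\|\cdot\|_{L^s}\le\|\cdot\|_{L^2}$, and the count $\tau^{2}\,2^{n}\,2^{(m+n)/2}$. Read literally in the notation of Theorem~\ref{lem-hypo}, your opening sentence would give $\phi^{1/(2+\delta)}$ and the constant $M(|V_1|)$ of the far set; only the swapped version is right.

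\textbf{The paper's route for $T_3$.} The paper repeats the same argument with $Y=\zeta_{j,l}$, $V_1=\{j,l\}$, $r=2+\delta$, $Z=\partial_i g_i(\tau U^{w}(j,l))-\partial_i g_i(0)$ and $s=(2+\delta)/(1+\delta)$, using $|Z|\le\|\nabla^{(2)}g\|_\infty\,\tau\,\|U^{w}(j,l)\|_{\ell^1}$. The factor $2^{k+2m}$ is $\tau^{3}\cdot 2^{n}\cdot 2^{k}\cdot 2^{(m+n)/2}$. So the ``extra'' $2^{m}$ you are guessing at is $\tau\,2^{(m+n)/2}$, the $L^{s}$-size of the centered factor, not $\ell^\infty$ bookkeeping. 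Your objection that this needs $\|\zeta_{j,l}\|_{L^{2+\delta}}<\infty$, hence roughly $4+2\delta$ moments, is correct. The paper uses it without comment; compare the $T_4$ proof, where only $\|\zeta_{j,l}\|_{L^{1+\delta/2}}$ appears.

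\textbf{The gap in your $T_3$ argument.} None of your substitutes proves the stated inequality. In Theorem~\ref{lem-hypo} the constant is $M(|V_1|)$, and the power of $\phi$ is $1/s$, where $s$ is the exponent of the $\mathcal{F}(V_2)$-measurable factor. Consider first the choices that give $\phi^{1}$ (bounded factor in $L^\infty$, $X_jX_l$ in $L^1$) or $\phi^{2/(2+\delta)}$ ($\zeta_{j,l}$ in $L^{1+\delta/2}$ on the $s$-side). Both force the far-away factor onto $V_1$. The constant is then $M(|V_1|)$ with $|V_1|$ of order $2^{n}$, not $M(2)$, and Theorem~\ref{thm:nahapetian} gives no uniform control of it. If instead you keep $V_1=\{j,l\}$, then $\zeta_{j,l}$ carries an exponent $r\le 1+\delta/2$. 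You get $\phi^{1-1/r}$ with $1-1/r\le\delta/(2+\delta)<\rho$, which is strictly weaker than claimed.

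Your fallback options fail as well.
\begin{itemize}
\item The bounded route yields $\|\nabla g\|_\infty\,2^{k+m}$, not $\|\nabla^{(2)}g\|_\infty\,2^{k+2m}$. The inequality $\|\partial_i g_i\|_\infty\le\|\nabla^{(2)}g\|_\infty$ you invoke is false: take $g_i(x)=x_i$.
\item The centered intermediate split needs $\tau\|U^{w}(j,l)\|_{\ell^1}$ in $L^{(2+\delta)/\delta}$. For $\delta<1$ this exponent exceeds $2+\delta$, outside the range of Theorem~\ref{thm:mixing_wiener:maximal_inequality}.
\item The comparison $\phi^{2/(2+\delta)}\le\phi^{\rho}$ holds only for $\delta\le 1$, not ``$\delta\le 2$ or so''.
\end{itemize}
To get the stated bound you need the paper's direct argument plus an extra moment hypothesis making $\zeta_{j,l}\in L^{2+\delta}$, for instance $\sup_j\|X_j\|_{L^{4+2\delta}}<\infty$. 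With only $2+\delta$ moments, the covariance inequality as stated yields a smaller power of $\phi(2^{k})$.
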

\begin{proof}
  The two inequalities are proved in the same way.
  Apply Theorem~\ref{lem-hypo} with
  \begin{equation*}
    Y=X_j, \  Z=g_{i}(\tau\Aweak{U}{j})-g_{i}(0),\
    r=2+\delta,\   s=\frac{2+\delta}{1+\delta},\
    V_{1}=\{j\},\   V_{2}=\bigcup_{i'\in \A}\Aw{\A}{i'}{j},
  \end{equation*}
  to obtain
  \begin{align*}
    |T_2|
     & \lesssim  \|\nabla g\|_{\infty}\tau^{2}\phi(2^{k})^{\frac{1+\delta}{2+\delta}}\sum_{i\in \A}\sum_{j\in \A_{i}}\Bigl\|\,\|\Aweak{U}{j}\|_{\ell^{1}}\Bigr\|_{\L^{(2+\delta)/(1+\delta)}}.
  \end{align*}
  According to Theorem~\ref{thm:mixing_wiener:maximal_inequality},
  \begin{align*}
    \Bigl\|\,\|\Aweak{U}{j}\|_{\ell^{1}}\Bigr\|_{\L^{(2+\delta)/(1+\delta)}} & \le\sum_{i\in \A}\Bigl\| \sum_{l\in \Aw{\A}{i}{j}}X_{l}\Bigr\|_{\L^{(2+\delta)/(1+\delta)}} \\
                                                                             & \lesssim \sum_{i\in \A}  \sqrt{\card(\Aw{\A}{i}{j})}                                        \\
                                                                             & \lesssim 2^{(m+n)/2}.
  \end{align*}
  Assertion~\eqref{eq_mixing_wiener:6} then follows.
  We apply Theorem~\ref{lem-hypo} to
  \begin{multline*}
    Y=\zeta_{j,l},\  Z=\p_{i}g_{i}\left(\tau\Aweak{U}{j,l}\right) -\p_{i}g_{i}(0),\
    r=2+\delta, \\  s=\frac{2+\delta}{1+\delta},
    V_{1}=\{j,l\}, \  V_{2}=\bigcup_{i'\in \A}\left(\Aw{\A}{i'}{j}\cap \Aw{\A}{i'}{l}\right)
  \end{multline*}
  and according to Theorem~\ref{thm:mixing_wiener:maximal_inequality}, we get,
  with the same arguments as just above,
  \begin{align*}
    |T_3| & \lesssim
    \|\nabla^{(2)}g\|_{\infty}\tau^{3}\phi(2^{k})^{\frac{1+\delta}{2+\delta}}\sum_{i\in \A}\sum_{j\in \A_{i}}\sum_{l\in \As{\A}{i}{j}}\Bigl\|\,\|\Aweak{U}{j,l}\|_{\ell^{1}}\Bigr\|_{\L^{(2+\delta)/(1+\delta)}} \\
          & \lesssim  \|\nabla^{(2)}g\|_{\infty}\tau^{3}\phi(2^{k})^{\frac{1+\delta}{2+\delta}}2^{n}2^{k}2^{(m+n)/2}                                                                                             \\
          & \lesssim  \|\nabla^{(2)}g\|_{\infty} \phi(2^{k})^{\frac{1+\delta}{2+\delta}}2^{k+2m}.
  \end{align*}
  The proof is thus complete.
\end{proof}
The terms $T_{4}$, $T_{5}$ and $T_{6}$ are bounded above regardless of the rate of
convergence of~$\phi$ to zero.
\begin{theorem}[Bound for $T_{4}$]
  Let $X\in \mathcal{W}_{\delta,\phi}$. With the same notation as above, for
  $\delta\in (0,1]$, we have:
  \begin{equation}
    \label{eq_mixing_wiener:5}
    |T_{4}|\lesssim
    \|\nabla g\|_{\infty}^{1-\delta}\|\nabla^{(2)}g\|_{\infty}^\delta 2^{(1+\frac{\delta}{2})(m+k)-\frac{n\delta}{2}},
  \end{equation}
  for $\delta\ge 1$, we get
  \begin{equation}
    \label{eq_mixing_wiener:5_bis}
    |T_{4}|\lesssim
    \|\nabla^{(2)}g\|_{\infty}\ 2^{(1+\frac{1}{2})(m+k)-\frac{n}{2}}.
  \end{equation}
\end{theorem}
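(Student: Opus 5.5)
The term $T_{4}$ is
\begin{equation*}
  T_{4}=\tau^{2}\sum_{i\in \A}\sum_{j\in \A_{i}}\sum_{l\in \As{\A}{i}{j}}\int_{0}^{1}\esp{\zeta_{j,l}\,\Bigl\{\p_{i}g_{i}\bigl(\tau \Aweak{U}{j}+\tau r\Astrong{U}{j}\bigr)-\p_{i}g_{i}\bigl(\tau\Aweak{U}{j,l}\bigr)\Bigr\}}\dif r,
\end{equation*}
where $\zeta_{j,l}=X_{j}X_{l}-\Gamma_{|j-l|}$. The plan is to bound each summand by an interpolation between a crude bound and a Lipschitz bound on $\p_{i}g_{i}$. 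The two arguments of $\p_{i}g_{i}$ differ by
\begin{equation*}
  D_{j,l,r}=\tau\bigl(\Aweak{U}{j}-\Aweak{U}{j,l}\bigr)+\tau r\Astrong{U}{j},
\end{equation*}
a vector whose coordinates are sums of the $X_{q}$ over indices within distance $2^{k}$ of $j$ or of $l$. By \eqref{eq_mixing_core:1}, only the coordinates $i-1,i,i+1$ are nonzero, and at most $3\cdot 2^{k}$ terms are involved. Hence, by Theorem~\ref{thm:mixing_wiener:maximal_inequality}, for any $p\le 2+\delta$,
\begin{equation*}
  \bigl\|\,\|D_{j,l,r}\|_{\ell^{1}}\bigr\|_{\L^{p}}\lesssim \tau\, 2^{k/2}.
\end{equation*}

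For $\delta\in(0,1]$, I would use the elementary inequality $|\p_{i}g_{i}(x)-\p_{i}g_{i}(y)|\le \bigl(2\|\nabla g\|_{\infty}\bigr)^{1-\delta}\bigl(\|\nabla^{(2)}g\|_{\infty}\|x-y\|_{\ell^{1}}\bigr)^{\delta}$. The summand is then bounded by
\begin{equation*}
  \|\nabla g\|_{\infty}^{1-\delta}\|\nabla^{(2)}g\|_{\infty}^{\delta}\,\esp{|\zeta_{j,l}|\,\|D_{j,l,r}\|_{\ell^{1}}^{\delta}}.
\end{equation*}
By H\"older with exponents $\frac{2+\delta}{2}$ and $\frac{2+\delta}{\delta}$, we get $\|\zeta_{j,l}\|_{\L^{(2+\delta)/2}}\lesssim 1$ from \eqref{C11_ineq} and Cauchy--Schwarz. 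The other factor is $\esp{\|D\|^{2+\delta}}^{\delta/(2+\delta)}\lesssim (\tau 2^{k/2})^{\delta}$, again by the maximal inequality with $p=2+\delta$. Summing over $i\in\A$, $j\in\A_{i}$ and $l\in\As{\A}{i}{j}$ gives at most $2^{n}\cdot 2^{k+1}$ terms. The total is therefore
\begin{equation*}
  \tau^{2}\, 2^{n+k}\,(\tau 2^{k/2})^{\delta}=2^{m+k}\,2^{\frac{\delta}{2}(m-n)}\,2^{\frac{k\delta}{2}}=2^{(1+\frac{\delta}{2})(m+k)-\frac{n\delta}{2}},
\end{equation*}
using $\tau^{2}=2^{m-n}$. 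This is exactly \eqref{eq_mixing_wiener:5}.

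For $\delta\ge 1$, the sequence is bounded in $\L^{3}$. I take the pure Lipschitz bound (the exponent $1$ case): $|\zeta_{j,l}|\,\|\nabla^{(2)}g\|_{\infty}\|D\|_{\ell^{1}}$, controlled by H\"older with exponents $3/2$ and $3$. The same count yields \eqref{eq_mixing_wiener:5_bis}.

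The main obstacle is the moment bound on $D_{j,l,r}$. The differences $\Aweak{U}{j}-\Aweak{U}{j,l}$ consist of indices in $\Aw{\A}{i'}{j}\setminus\Aw{\A}{i'}{l}$, which may form two disjoint intervals (near $l$, and part of the window around $j$). I would split them into at most a bounded number of consecutive stretches, each of length $\le 2\cdot2^{k}+1$, and apply the maximal inequality to each stretch, using Minkowski to sum. The $\ell^{1}$ norm only picks up the three neighbouring coordinates, so the constant stays uniform in $i,j,l,m,n$. One also needs $\zeta_{j,l}\in\L^{(2+\delta)/2}$ uniformly, which follows from $\|X_{j}X_{l}\|_{\L^{(2+\delta)/2}}\le\|X_{j}\|_{\L^{2+\delta}}\|X_{l}\|_{\L^{2+\delta}}$.
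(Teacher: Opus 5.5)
Your proposal is correct and follows essentially the same route as the paper: you interpolate $|\partial_i g_i(x)-\partial_i g_i(y)|$ between the sup bound and the Lipschitz bound, apply H\"older with exponents $(1+\delta/2,\,1+2/\delta)$, control the increment (the paper's $U^{s}(j)$ and $U^{w,s}(j,l)$, which you group into $D_{j,l,r}$) by the maximal inequality at order $\tau 2^{k/2}$, and count $\lesssim 2^{n+k}$ triples $(i,j,l)$. For $\delta\ge 1$, your reduction to the pure Lipschitz bound with $L^{3}$ moments is exactly the $\delta=1$ case, which is what the paper's brief remark amounts to; your version states it more cleanly.
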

\begin{proof}
  We first remark that, for any $i\in \A$,
  \begin{equation*}
    \Aw{\A}{i}{j}\backslash \left(\Aw{\A}{i}{j}\cap\Aw{\A}{i}{l}\right)=\Aw{\A}{i}{j}\cap \As{\A}{i}{l}.
  \end{equation*}
  We then introduce the random vector
  \begin{equation*}
    U^{w,s}(j,l)=\sum_{i\in \A}U_{\Aw{\A}{i}{j}\cap \As{\A}{i}{l}}\, e_{i}.
  \end{equation*}
  We also recall that for any $\delta\ge 0$, we have
  $(|x|+|y|)^{\delta}\lesssim |x|^{\delta}+|y|^{\delta}$ for any $x,y \in \R$.
  We first write
  \begin{multline*}
    \left|  \p_{i}g_{i}\left(\tau \Aweak{U}{j}+\tau r \Astrong{U}{j}\right)-\p_{i}g_{i}\left(\tau \Aweak{U}{j,l}\right)\right|\\
    \begin{aligned}
       & \lesssim \|\nabla g\|_{\infty}^{1-\delta}\left|\p_{i}g_{i}\left(\tau \Aweak{U}{j}+\tau r \Astrong{U}{j}\right)-\p_{i}g_{i}\left(\tau \Aweak{U}{j,l}\right)\right|^{\delta}          \\
       & \lesssim \tau^{\delta}\|\nabla g\|_{\infty}^{1-\delta}\|\nabla^{(2)}g\|_{\infty}^{\delta}\left\{\|\Astrong{U}{j}\|_{\ell^{1}}^{\delta}+\|U^{w,s}(j,l)\|_{\ell^{1}}^{\delta}\right\}
    \end{aligned}
  \end{multline*}
  Then, we apply the $(1+2/\delta, 1+\delta/2)$-H\"older inequality to obtain
  \begin{multline*}
    |T_{4}|  \lesssim \|\nabla g\|_{\infty}^{1-\delta}\|\nabla^{(2)}g\|_{\infty}^{\delta}\ \tau^{2+\delta}\\
    \times \sum_{i\in \A} \sum_{j\in \A_{i}}
    \sum_{l\in \As{\A}{i}{j}} \|\zeta_{j,l}\|_{\L^{1+\delta/2}}\left\{\|\Astrong{U}{j}\|_{\L^{2+\delta}}^{\delta}+\|U^{w,s}(j,l)\|_{\L^{2+\delta}}^{\delta}\right\}.
  \end{multline*}
  We then note that, for $j,l$ fixed, the two vectors $\Astrong{U}{j}$ and $U^{w,s}(j,l)$
  have at most three non-zero components and that for any $i\in \A$,
  \begin{equation*}
    \card( \As{\A}{i}{j} )\le 2^{k} \text{ and
    }\card(\Aw{\A}{i}{j}\cap \As{\A}{i}{l})\le 2^{k},
  \end{equation*}
  hence according to
  Theorem~\ref{thm:mixing_wiener:maximal_inequality}, we have
  \begin{equation*}
    \|\Astrong{U}{j}\|_{\L^{2+\delta}}\lesssim 2^{k/2}
  \end{equation*}
  and the same result holds for $\|U^{w,s}(j,l)\|_{\L^{2+\delta}}$.
  The upper bound follows.

  For $\delta\ge 1$, the subadditivity of the function $x\mapsto x^{\delta}$ is replaced by a convexity inequality:
  \begin{equation*}
    (|x|+|y|)^{\delta}\le 2^{\delta-1}(|x|^{\delta}+|y|^{\delta})
  \end{equation*}
  which yields an insignificant multiplicative constant in the upper bound. The rest of the proof is similar.
\end{proof}
\begin{theorem}[Bound for $T_{5}$]
  Let $X\in \mathcal{W}_{\delta,\phi}$. With the same notations as above, we have:
  \begin{equation}
    \label{eq_mixing_wiener:3}|T_{5}|\lesssim 2^{\frac{k}{2}+\frac{3m}{2} -\frac{n}{2}}\ \|\nabla^{(2)}g\|_{\infty}.
  \end{equation}
\end{theorem}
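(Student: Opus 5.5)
The idea is to bound each integrand of $T_5$ with the mean value theorem applied to $\p_i g_i$, and then to sum the resulting moments using the maximal inequality. Recall
\begin{equation*}
T_{5}=\tau^{2}\sum_{i\in \A} \sum_{j\in \A_{i}} \sum_{l\in \As{\A}{i}{j}} \Gamma_{|l-j|}\int_{0}^{1}\esp{\p_{i}g_{i}\left(\tau \Aweak{U}{j}+r\tau\Astrong{U}{j}\right)-\p_{i}g_{i}(\tau U) }\dif r .
\end{equation*}
Since $U=\Aweak{U}{j}+\Astrong{U}{j}$, the two arguments differ by $(1-r)\tau\Astrong{U}{j}$. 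The mean value theorem, with the $\ell^{\infty}$ convention for $\nabla^{(2)}g$, then gives
\begin{equation*}
\left|\p_{i}g_{i}\left(\tau \Aweak{U}{j}+r\tau\Astrong{U}{j}\right)-\p_{i}g_{i}(\tau U)\right|\le (1-r)\,\tau\,\|\nabla^{(2)}g\|_{\infty}\,\|\Astrong{U}{j}\|_{\ell^{1}} .
\end{equation*}

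The moment of $\Astrong{U}{j}$ is handled exactly as in the proof of the bound for $T_4$. By \eqref{eq_mixing_core:1}, for $j\in\A_i$ the vector $\Astrong{U}{j}$ has at most three non-zero components, namely those of indices $i-1,i,i+1$. Each component is a sum of at most $2^{k+1}+1$ consecutive $X_q$. Theorem~\ref{thm:mixing_wiener:maximal_inequality} with $p=2$ then yields $\esp{\|\Astrong{U}{j}\|_{\ell^{1}}}\lesssim 2^{k/2}$, uniformly in $j$. Consequently
\begin{equation*}
|T_5|\lesssim \tau^{3}\,\|\nabla^{(2)}g\|_{\infty}\,2^{k/2}\sum_{i\in\A}\sum_{j\in\A_i}\sum_{l\in\As{\A}{i}{j}}|\Gamma_{|l-j|}| .
\end{equation*}

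The remaining step is the covariance sum, and here we should not pay the factor $2^{k}$ from the cardinality of $\As{\A}{i}{j}$. For fixed $j$ we bound the sum over $l$ by $\sum_{z\ge 0}2|\Gamma_z|$. This is finite because $|\Gamma_z|\lesssim \phi(z)^{\rho}$ by the covariance inequality (Theorem~\ref{lem-hypo}), as in the lemma defining $\sigma^2$, and $\sum_n\phi(n)^{1/2}<\infty$ with $\rho>1/2$ and $\phi\le 1$. The triple sum is therefore $\lesssim \sum_i\card(\A_i)=2^{n}$.

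Combining this with $\tau^{3}=2^{3(m-n)/2}$ gives
\begin{equation*}
|T_5|\lesssim 2^{\frac{3m}{2}-\frac{3n}{2}+n+\frac k2}\|\nabla^{(2)}g\|_{\infty}=2^{\frac{k}{2}+\frac{3m}{2}-\frac{n}{2}}\|\nabla^{(2)}g\|_{\infty},
\end{equation*}
which is \eqref{eq_mixing_wiener:3}.

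The only delicate point is the summability of $\Gamma$. It is what prevents an extra factor $2^{k}$, which would otherwise appear in the same way it does for $T_4$, where $\zeta_{j,l}$ is not summable in $l$. The rest is bookkeeping of the powers of $2$.
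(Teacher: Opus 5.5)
Your proof is correct and follows essentially the same route as the paper. You apply the mean value theorem to reduce the integrand to $\tau\|\nabla^{(2)}g\|_\infty\|U^s(j)\|_{\ell^1}$, use the maximal inequality for the factor $2^{k/2}$, and use summability of $\Gamma$ to leave $2^n$ terms, giving $\tau^3 2^{k/2}2^n$. Your justification of $\sum_z|\Gamma_z|<\infty$ directly from $\sum_n\phi(n)^{1/2}<\infty$ fits the stated hypothesis $X\in\mathcal{W}_{\delta,\phi}$ slightly better than the paper's appeal to $\Gamma\in\ell^{1/2}$, which was proved under $X\in\mathcal{W}^{\theta}_{\delta,\phi}$. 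The same goes for your explicit accounting of the up to three non-zero components of $U^s(j)$.
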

\begin{proof}
  Note that we have:
  \begin{equation*}
    ( \tau \Aweak{U}{j}+r\tau \Astrong{U}{j} )-\tau U=\tau (r-1) \Astrong{U}{j}.
  \end{equation*}
  Since $\Gamma$ belongs to $\ell^{1/2}(\N)\subset \ell^1(\N)$, we have, by virtue of the maximal inequality,
  \begin{align*}
    |T_{5}| & \lesssim \tau^{3}\|\nabla^{(2)}g\|_{\infty} \sum_{i\in \A} \sum_{j\in \A_i} \sum_{l\in \As{\A}{i}{j}} |\Gamma_{|j-l|}|\ \sqrt{\card(\As{\A}{i}{j})} \\
            & \lesssim \|\nabla^{(2)}g\|_{\infty}  \tau^{3}2^{k/2}2^{n},
  \end{align*}
  and the result follows.
\end{proof}
\begin{theorem}[Bound for $T_{6}$]
  Let $X\in \mathcal{W}_{\delta,\phi}$. With the same notations as above, we have:
  \begin{equation}
    \label{eq_mixing_core:6}
    |T_{6}|\lesssim 2^{2k+2m-n}\ \|\nabla g\|_{\infty}.
  \end{equation}
\end{theorem}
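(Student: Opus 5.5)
Since $T_{6}$ only involves the pairs $(j,l)$ with $j\in\A_{i}$ and $l$ in a neighbouring block $\A_{i\pm1}$ at distance at most $2^{k}$, the bound is obtained by crude counting and the mixing rate plays no role. I would first bound the derivative uniformly in $r$:
\begin{equation*}
  \left|\p_{i+\varepsilon}g_{i}\left(\tau \Aweak{U}{j}+\tau r\Astrong{U}{j}\right)\right|\le \|\nabla g\|_{\infty}
\end{equation*}
for every $r\in[0,1]$. This removes the dependence on $g$ inside the expectation and gives
\begin{equation*}
  |T_{6}|\le \tau^{2}\|\nabla g\|_{\infty}\sum_{\varepsilon=\pm1}\sum_{i\in\A}\sum_{j\in\A_{i}}\sum_{l\in\As{\A}{i+\varepsilon}{j}}\esp{|X_{l}X_{j}|}.
\end{equation*}
By Cauchy--Schwarz and the uniform bound \eqref{C11_ineq}, which controls the $\L^{2}$ norms, each expectation $\esp{|X_{l}X_{j}|}$ is bounded by a constant independent of $j,l,n,m,k$.

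It remains to count the triples $(i,j,l)$. By \eqref{eq_mixing_core:7}, for each fixed $i$ the set of $j\in\A_{i}$ for which $\As{\A}{i\pm1}{j}\neq\emptyset$ has cardinality at most $2^{k+1}$. For each such $j$, the set $\As{\A}{i+\varepsilon}{j}$ has at most $2^{k}+1\lesssim 2^{k}$ elements. The two values of $\varepsilon$ contribute only a factor $2$. The number of triples is therefore $\lesssim 2^{m}\cdot 2^{k}\cdot 2^{k}=2^{2k+m}$.

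Multiplying by $\tau^{2}=2^{m-n}$ gives
\begin{equation*}
  |T_{6}|\lesssim \|\nabla g\|_{\infty}\,2^{m-n}\,2^{2k+m}=2^{2k+2m-n}\|\nabla g\|_{\infty},
\end{equation*}
which is \eqref{eq_mixing_core:6}. The only point needing care is the counting step. One must check that \eqref{eq_mixing_core:7}, combined with \eqref{eq_mixing_core:1}, indeed restricts $j$ to the $2^{k}$ indices near each boundary of $\A_{i}$, and this uses $k<n-m$. There is no genuine obstacle beyond that check.
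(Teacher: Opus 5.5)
Your proof is correct and is essentially the paper's argument: bound $|\p_{i+\varepsilon}g_{i}|$ by $\|\nabla g\|_{\infty}$, control $\esp{|X_{l}X_{j}|}$ uniformly by Cauchy--Schwarz (the paper uses weak stationarity to get $\esp{X_{0}^{2}}$, you use the $\L^{2+\delta}$ bound, which works equally well), then count at most $2^{m}\cdot 2^{k+1}\cdot 2^{k+1}$ triples $(i,j,l)$ and multiply by $\tau^{2}=2^{m-n}$. The final check you flag is not actually needed for this bound. The sum defining $T_{6}$ already runs only over the neighbouring blocks $i\pm 1$, and the two $2^{k}$-type cardinality bounds follow directly from the definition of the strong sets. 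The condition $k<n-m$ is used earlier, in the decomposition theorem, not in the counting.
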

\begin{proof}
  The Cauchy--Schwarz inequality and the weak stationarity of $X$ imply that, for
  $\varepsilon=\pm 1$,
  \begin{equation*}
    \left|  \esp{X_{l}X_{j}\p_{i+\varepsilon}g_{i}\left(\tau \Aweak{U}{j} +\tau r \Astrong{U}{j}\right)}\right|\le \|\nabla g\|_{\infty} \esp{X_{0}^{2}}.
  \end{equation*}
  We now need to estimate the cardinality of
  \begin{equation*}
    \bigl\{(i,j,l), i\in \A, j\in \A_{i},\, l\in \As{\A}{i-1}{j}\cup \As{\A}{i+1}{j}\bigr\}.
  \end{equation*}
  Since
  \begin{equation*}
    \card\{j\in \A_{i},\ \As{\A}{i-1}{j}\cup \As{\A}{i+1}{j}\neq \emptyset \}\le 2^{k+1},
  \end{equation*}
  and
  \begin{equation*}
    \card\left(\As{\A}{i-1}{j}\cup \As{\A}{i+1}{j}\right)\le 2^{k+1},
  \end{equation*}
  we obtain the desired result.
\end{proof}
\begin{theorem}[Optimization]
  \label{thm:optimisation}
  Let $X\in \mathcal{W}_{\delta,\phi}^{\theta}$, let $\eta<1/2$ and let $p>1/\eta$.
  Set
  \begin{equation}
    \label{eq_mixing_core:15}
    \bar{\delta}=\min(\delta,\,1)\ \text{ and }\ c=\frac{\bar{\delta}}{2+\bar{\delta}}\in \Bigl(0,\frac{1}{3}\Bigr],
  \end{equation}
  and assume that
  \begin{equation}
    \label{eq_mixing_core:16}
    2\,\rho\theta\, c>1\Longleftrightarrow \rho\theta>\frac{2+\bar{\delta}}{2\,\bar{\delta}}\cdot
  \end{equation}
  Set
  \begin{equation}
    \label{eq_mixing_core:12}
    \gamma^{*}=\frac{2\,\rho\theta\, c-1}{2\,\rho\theta+3}
    \in \Bigl(0,\frac{1}{3}\Bigr)\cdot
  \end{equation}
  Then, we have
  \begin{equation*}
    \dist_{\KR(W_{\eta,p})}\Bigl(\law(\pi_{m}(X^{n})),\ \law(\sigma\pi_{m}(B))\Bigr)\lesssim N^{-\gamma^{*}(1/2-\eta)}\log N
  \end{equation*}
  and
  \begin{equation*}
    \dist_{\KR(W_{\eta,p})}\left(\law\Bigl(\sum_{i=0}^{2^{n}-1}X_{i}h_{i}^{n}\Bigr),\ \law(\sigma B)\right)\lesssim N^{-\gamma^{*}( 1/2-\eta )}\log N,
  \end{equation*}
  where $N=2^{n}$.
  Since $\gamma^{*}$ depends on $\delta$ only through $\bar{\delta}=\min(\delta,1)$,
  the rate obtained for any $\delta\ge 1$ is the same as the one obtained for
  $\delta=1$, namely
  \begin{equation*}
    \gamma^{*} = \frac{2 \, \rho\theta - 3}{3 \, {\left(2 \, \rho\theta+3\right)}}\cdot
  \end{equation*}
\end{theorem}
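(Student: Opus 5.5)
The plan is to feed the decomposition of Theorem~\ref{thm_mixing_wiener:decomposition} into the abstract principle of Theorem~\ref{thm:general_principle}, with $\nu=\law(\tau U_m^n)$ and $\mu=\law(B_m)$. We then choose the two free parameters $m$ and $k$ as fixed fractions of $n$, and finish with a triangle inequality using the interpolation estimates.

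\emph{Step 1: the Stein constant $b$.} For $\psi$ regular, put $g=\nabla\psi$, so that $g_i=\p_i\psi$, $\p_i g_i=\p^{(2)}_{ii}\psi$, and
\begin{equation*}
L\psi(x)=\sp{x}{g(x)}-\sigma^2\sum_{i\in\A}\p_i g_i(x).
\end{equation*}
The proof of Theorem~\ref{thm:general_principle} only uses the bound on $|\int L\psi\dif\nu|$, with $\psi=\int_\xi^\infty P_tf\dif t$. I will therefore aim to verify the hypothesis in that weaker, signed form. By Theorem~\ref{thm_mixing_wiener:decomposition},
\begin{equation*}
\int L\psi\dif\nu=\Bigl(T_1-\sigma^2\sum_{i\in\A}\esp{\p_ig_i(\tau U)}\Bigr)+T_2+\dots+T_6 .
\end{equation*}
I then insert \eqref{eq_mixing_core:10}, \eqref{eq_mixing_wiener:6}, \eqref{eq_mixing_core:8}, \eqref{eq_mixing_wiener:5} (or \eqref{eq_mixing_wiener:5_bis} when $\delta\ge1$, which is why only $\bar\delta$ appears), \eqref{eq_mixing_wiener:3} and \eqref{eq_mixing_core:6}. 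Since $\|\nabla^{(1)}g\|_\infty$ and $\|\nabla^{(2)}g\|_\infty$ are controlled by $\sup_{i=1,2,3}\|\nabla^{(i)}\psi\|_\infty$, the interpolated factor $\|\nabla g\|^{1-\bar\delta}\|\nabla^{(2)}g\|^{\bar\delta}$ is also bounded by this supremum. Using $\phi(2^k)^\rho\le 2^{-k\rho\theta}$, this gives $b$ as the sum of the powers of two
\begin{equation*}
2^{k-n+2m},\ 2^{m+k(1-\rho\theta)},\ 2^{-k\rho\theta+\frac{3m}{2}+\frac n2},\ 2^{-k\rho\theta+k+2m},\ 2^{(1+\frac{\bar\delta}{2})(m+k)-\frac{n\bar\delta}{2}},\ 2^{\frac k2+\frac{3m}{2}-\frac n2},\ 2^{2k+2m-n}.
\end{equation*}

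\emph{Step 2: optimization.} Take $m=\gamma n$ and $k=\beta n$, rounded to integers. The $T_4$ exponent is nonpositive iff $\gamma+\beta\le c$. The $T_2$ exponent is nonpositive iff $\beta\ge(3\gamma+1)/(2\rho\theta)$. Saturating both gives
\begin{equation*}
\gamma(2\rho\theta+3)=2\rho\theta c-1,
\end{equation*}
that is, $\gamma=\gamma^*$, which is positive exactly under \eqref{eq_mixing_core:16}. I then check that the remaining exponents are also $\le0$:
\begin{itemize}
\item $T_6$ and the first $T_1$ term are bounded by $2(\gamma+\beta)-1\le 2c-1<0$;
\item the $T_5$ exponent is smaller than that of $T_6$;
\item the second $T_1$ term and $T_3$ are dominated by the $T_2$ exponent, because $\beta\le c<1/2$ and $\gamma<1/3$.
\end{itemize}
Finally $k<n-m$ holds since $\gamma+\beta\le c<1$. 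Hence $b\lesssim1$, while Theorem~\ref{thm:P_xi_minus_f} gives $a\simeq 2^m$. If $b$ is smaller than $1$, I simply enlarge it to a constant, which is legitimate because the hypothesis is an upper bound; this also keeps $2b\le a$ for large $n$. Theorem~\ref{thm:general_principle} then yields
\begin{equation*}
\dist_{\KR(\R^{2^m})}\bigl(\law(\tau U_m^n),\law(B_m)\bigr)\lesssim 1+\log 2^m\lesssim\log N .
\end{equation*}
Combined with Theorem~\ref{thm_mixing_core:reduction_finite_dimension}, this gives the first claim with factor $2^{-m(1/2-\eta)}=N^{-\gamma^*(1/2-\eta)}$.

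\emph{Step 3: removing the interpolation.} For the second claim I use the triangle inequality
\begin{multline*}
\dist_{\KR}\bigl(\law(X^n),\law(\sigma B)\bigr)\le \esp{\|X^n-\pi_mX^n\|_{W_{\eta,p}}}\\
+\dist_{\KR}\bigl(\law(\pi_mX^n),\law(\sigma\pi_mB)\bigr)+\sigma\,\esp{\|B-\pi_mB\|_{W_{\eta,p}}}.
\end{multline*}
The first and last terms are bounded via Jensen ($\L^1\le\L^p$) together with Theorems~\ref{thm:mixing_wiener:convergence} and~\ref{thm:eq_04-technical-lemmas-new:1}, each being $\lesssim2^{-m(1/2-\eta)}$. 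The closed formula for $\delta\ge1$ follows by substituting $c=1/3$ into $\gamma^*$.

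\emph{Main obstacle.} The delicate point is Step~1: legitimately bounding $\int L\psi\dif\nu$ with constants uniform in $n,m,k$ through the $T_q$'s, including the mixed-norm bound for $T_4$. The exponent bookkeeping of Step~2 must also confirm that no term other than $T_2$ and $T_4$ is binding.
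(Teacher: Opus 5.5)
Your proposal follows the paper's proof essentially step for step. It uses the same seven-term constant $b_{n,m,k}$ in Theorem~\ref{thm:general_principle} with $a\lesssim 2^{m}$. It fixes the parameters the same way, by saturating the $T_{4}$ constraint $\gamma+\beta\le c$ and the $T_{2}$ constraint $\beta\ge (1+3\gamma)/(2\rho\theta)$ (your $\beta$ is the paper's $\alpha$). It closes with the same three-term triangle inequality. Your remark that only the signed integral $\int L\psi\, d\nu$ needs to be controlled is a point the paper passes over silently, since the hypothesis of Theorem~\ref{thm:general_principle} is written with $|L\psi|$.

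Two justifications in your exponent check need repair, though the conclusions survive.

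\emph{The $T_{5}$ comparison is wrong.} The $T_{5}$ exponent $\beta/2+3\gamma/2-1/2$ is \emph{not} smaller than the $T_{6}$ exponent $2(\beta+\gamma)-1$: their difference is $(1-3\beta-\gamma)/2\ge (1-3c)/2\ge 0$. It is nonetheless nonpositive, because $\beta/2+3\gamma/2-1/2\le \tfrac{3}{2}(\beta+\gamma)-\tfrac{1}{2}\le (3c-1)/2\le 0$.

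\emph{The $T_{3}$ domination needs $c\le 1/3$.} That the $T_{2}$ exponent dominates the $T_{3}$ exponent is equivalent to $\beta+\gamma/2\le 1/2$. This follows from $\beta+\gamma\le c\le 1/3$, not from $c<1/2$ alone.

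With these two lines fixed, the argument is complete.
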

\begin{proof}
  For $F \in \Lip_{1}(W_{\eta,p})$, we have
  \begin{align*}
    \esp{F(X^{n})}-\esp{F(\sigma B)} & \le\left|\esp{F(X^{n})}-\esp{F(\pi_{m}(X^{n}))}\right|             \\
                                     & + \left| \esp{F(\pi_{m}(X^{n}))}-\esp{F(\sigma\pi_{m}( B))}\right| \\
                                     & + \left|\esp{F(\sigma\pi_{m}( B))}-\esp{F(\sigma B)}\right|.
  \end{align*}
  According to Theorems~\ref{thm:mixing_wiener:convergence}
  and~\ref{thm:eq_04-technical-lemmas-new:1}, the two extreme terms of the
  right-hand side are bounded, up to a multiplicative constant, by
  $2^{-m(1/2-\eta)}$. As to the middle term,
  Theorem~\ref{thm_mixing_core:reduction_finite_dimension} bounds it by
  $2^{-m(1/2-\eta)}\dist_{\KR(\R^{2^{m}})}(\law(\tau U_{m}^{n}),\, \law(B_{m}))$,
  and Theorem~\ref{thm:general_principle} applies to this last distance with
  \begin{equation*}
    a=a_{n,m}=\esp{\|\tau U_{m}^{n}\|_{\ell^{1}}}\lesssim 2^{m},
  \end{equation*}
  see Theorem~\ref{thm:P_xi_minus_f}, and with $b=b_{n,m,k}$ given by the
  estimates of $T_{1},\dots,T_{6}$, namely
  \begin{multline}
    \label{eq_mixing_core:19}
    b_{n,m,k}  =  2^{k+2m-n}+2^{k(1-\rho\theta)+m}+ 2^{-\rho\theta k+3m/2+n/2}\\
    + 2^{(1-\rho\theta) k+2m}+2^{(1+\bar{\delta}/2) k+(1+\bar{\delta}/2)m-n\bar{\delta}/2}+2^{k/2+3m/2-n/2}+2^{2k+2m-n}.
  \end{multline}
  We thus have
  \begin{equation}
    \label{eq_mixing_core:23}
    \left| \esp{F(\pi_{m}(X^{n}))}-\esp{F(\sigma\pi_{m}( B))}\right|\lesssim 2^{-m(1/2-\eta)}\ b_{n,m,k}\left( 1+\log\frac{a_{n,m}}{2\,b_{n,m,k}} \right).
  \end{equation}
  The fifth term comes from the bound obtained for $T_{4}$: the interpolation
  argument used there requires $\delta\le 1$, and for $\delta\ge 1$ we can only
  use the same estimate with $\delta$ replaced by~$1$,
  see~\eqref{eq_mixing_wiener:5} and~\eqref{eq_mixing_wiener:5_bis}. This is why
  $\bar{\delta}=\delta\wedge 1$, and not $\delta$, appears
  in~\eqref{eq_mixing_core:19}: increasing the number of moments beyond
  $2+\bar\delta$ does not improve the estimate of $T_{4}$, hence does not improve
  the final rate.

  Since the two extreme terms are of order $2^{-m(1/2-\eta)}$, and since the
  logarithmic factor of~\eqref{eq_mixing_core:23} is at most of
  order~$n$, the best rate we can hope for is $2^{-m(1/2-\eta)}\log n$, and it is
  achieved as soon as $b_{n,m,k}$ remains bounded. We are thus led to maximize
  $m$ under this last constraint.

  To this end, we parameterize the triple $(n,m,k)$ by $m=\gamma n $ with
  $0<\gamma<1$ and $k=\alpha n$ with $0<\alpha<1$. Note that the
  hypothesis~\eqref{eq_mixing_core:13} implies $\alpha<1-\gamma$. Each of
  the seven terms of $b_{n,m,k}$ is of the form $2^{ne}$, where $e$ is an affine
  function of $(\alpha,\gamma)$, so that $b_{n,m,k}$ is bounded as soon as these
  seven exponents are non-positive. Listed in the same order as the
  corresponding terms, these conditions read
  \begin{align*}
    \alpha & \le 1-2\gamma                       \\
    \alpha & \ge \frac{\gamma}{\rho\theta-1}     \\
    \alpha & \ge\frac{1}{2\rho\theta}(1+3\gamma) \\
    \alpha & \ge \frac{2}{\rho\theta-1}\gamma    \\
    \alpha & \le c-\gamma                        \\
    \alpha & \le {1}-3\gamma                     \\
    \alpha & \le\frac{1}{2}-\gamma.
  \end{align*}
  Among the three conditions $\alpha<1-\gamma$, $\alpha\le 1-2\gamma$ and
  $\alpha\le 1-3\gamma$, the last one is the strongest, hence we may discard the
  other two. Likewise, the fourth inequality is stronger than the second one,
  hence the latter can be discarded. Moreover, since the constraint $\alpha>0$
  forces $\gamma<c\le 1/3$, we have $2\gamma<1-c$ and therefore
  \begin{equation*}
    c-\gamma\le \min\left( \frac{1}{2}-\gamma,\ 1-3\gamma \right),
  \end{equation*}
  so that the fifth inequality is the strongest of the three upper bounds. Note
  that the lines $\alpha=c-\gamma$, $\alpha=1/2-\gamma$ and $\alpha=1-\gamma$ are
  parallel, hence which of them is the lowest does not depend on $\gamma$; since
  $c\le 1/3<1/2<1$, it is always the first one. We are thus left with the two
  constraints
  \begin{equation*}
    0<\alpha \le f_{1}(\gamma):= c-\gamma
  \end{equation*}
  and
  \begin{equation*}
    \alpha\ge \max\bigl(f_{2}(\gamma),\, f_{3}(\gamma)\bigr), \text{ where }
    f_{2}(\gamma):=\frac{2}{\rho\theta-1}\gamma \text{ and } f_{3}(\gamma):=\frac{1}{2\rho\theta}(1+3\gamma).
  \end{equation*}
  The graphs of $f_{2}$ and $f_{3}$ intersect at the point
  \begin{equation*}
    \left(\frac{\rho\theta-1}{\rho\theta+3},\, \frac{2}{\rho\theta+3}\right),
  \end{equation*}
  which lies strictly above the graph of $f_{1}$: this amounts to
  $\rho\theta+1>c\,(\rho\theta+3)$, which holds
  since $c\le 1/3$.
  It follows that $\gamma^{*}$ is the abscissa of the intersection point of the graphs
  of $f_{3}$ and $f_{1}$, which gives~\eqref{eq_mixing_core:12}. Since $\gamma^*>0$, we must have $2\rho \theta c>1$, which is condition~\eqref{eq_mixing_core:16}.
\end{proof}
Note that $\gamma=1/3$ would yield the exponent $(-1/6+\eta/3)$, which
is the same as the one we obtained in~\cite{coutin2020stein} for independent
random variables. The restriction obtained here shows that this threshold
cannot be reached in the presence of correlations.

For $\delta\ge 1$ and $\theta$ arbitrarily large, which corresponds to
super-polynomially fast mixing (as in~\cite{Haeusler1984}) or even to
independence, $\gamma^{*}$ can be chosen as close to $1/3$ as we
desire. This is the result of~\cite{coutin2020stein}.
\def\DD{\mathbb D}
We denote by $\DD$ the Skorohod space of right-continuous functions on $[0,1]$
with left-hand limits, equipped with the uniform norm~$\L^{\infty}$. Consider now
the piecewise constant version of the random walk:
\begin{equation*}
  Y^{n}(t)=2^{-n/2}\sum_{k=0}^{\lfloor 2^{n}t\rfloor-1}X_{k}.
\end{equation*}
We can proceed similarly for the Kantorovitch--Rubinstein distance on the space
$(\DD,\, \L^{\infty})$, the only two changes being the following. First,
the affine interpolation $\pi_{m}X^{n}$ still approximates $Y^{n}$, but the rate
is now governed by the integrability exponent~$p$ instead of the regularity
exponent~$\eta$; this is the content of Lemma~\ref{lem:skorohod_interpolation}
below. Second, in the proof of
Theorem~\ref{thm_mixing_core:reduction_finite_dimension}, the quantity
$\sup_{q}\|h_{q}^{m}\|_{W_{\eta,p}}$ has to be replaced by
\begin{equation}
  \label{eq_mixing_core:22}
  \sup_{0\le q<2^{m}}\|h_{q}^{m}\|_{\L^{\infty}}=2^{-m/2},
\end{equation}
since $h_{q}^{m}$ increases from~$0$ to $2^{-m/2}$ on
$[q2^{-m},(q+1)2^{-m}]$ and is constant afterwards.
\begin{lemma}
  \label{lem:skorohod_interpolation}
  Let $X\in \mathcal{W}_{\delta,\phi}$ and let $p\in [2,2+\delta]$. Then
  \begin{equation}
    \label{eq_mixing_core:20}
    \sup_{n> m\ge 1}\ 2^{m(1/2-1/p)}\ \esp{\|Y^{n}-\pi_{m} X^{n}\|_{\L^{\infty}}^{p}}^{1/p}<\infty,
  \end{equation}
  and the same holds with $Y^{n}-\pi_{m}X^{n}$ replaced by $B-\pi_{m}B$.
\end{lemma}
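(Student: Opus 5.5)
The plan is to split the difference into a discretization part and a within-cell oscillation part, and to control the supremum over the $2^{m}$ dyadic cells by a union (sum) of $p$-th moments.

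Fix $n>m\ge 1$ and write $I_{q}=[q2^{-m},(q+1)2^{-m}]$, $0\le q<2^{m}$. Since $\pi_{m}X^{n}$ is the affine interpolation of $X^{n}$ at the dyadic points $q2^{-m}$, and $Y^{n}$ coincides with $X^{n}$ at all points of the finer grid $2^{-n}\N$, we have the pathwise bound
\begin{equation*}
  \|Y^{n}-\pi_{m}X^{n}\|_{\L^{\infty}}\le \|Y^{n}-X^{n}\|_{\L^{\infty}}+\max_{0\le q<2^{m}}\sup_{t\in I_{q}}\bigl|X^{n}(t)-X^{n}(q2^{-m})\bigr|\cdot 2 .
\end{equation*}
Here we use that on $I_{q}$ the function $\pi_{m}X^{n}-X^{n}(q2^{-m})$ is a convex combination of $0$ and $X^{n}((q+1)2^{-m})-X^{n}(q2^{-m})$.

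The first term is $2^{-n/2}\max_{k<2^{n}}|X_{k}|$. We bound its $p$-th moment by summing over $k$, which gives
\begin{equation*}
  \esp{\max_{k<2^{n}}|X_{k}|^{p}}\le 2^{n}\sup_{k}\|X_{k}\|_{\L^{p}}^{p}.
\end{equation*}
Hence the first term has $\L^{p}$ norm $\lesssim 2^{-n(1/2-1/p)}\le 2^{-m(1/2-1/p)}$, because $p\ge 2$ and $n>m$.

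For the second term we again replace the maximum over $q$ by a sum:
\begin{equation*}
  \esp{\max_{q}\sup_{t\in I_{q}}|X^{n}(t)-X^{n}(q2^{-m})|^{p}}\le\sum_{q<2^{m}}\esp{\sup_{t\in I_{q}}|X^{n}(t)-X^{n}(q2^{-m})|^{p}}.
\end{equation*}
Since $X^{n}$ is piecewise affine on the grid $2^{-n}\N$, the supremum over $I_{q}$ is attained at grid points. There it equals $2^{-n/2}\max_{1\le i\le 2^{n-m}}|S_{q2^{n-m}}(i)|$, with $S_{k}(i)$ the partial sums of Theorem~\ref{thm:mixing_wiener:maximal_inequality}. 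That theorem, applied with block length $2^{n-m}$ and $p\in[2,2+\delta]$, bounds each summand by
\begin{equation*}
  2^{-np/2}\,2^{(n-m)p/2}=2^{-mp/2},
\end{equation*}
using uniform boundedness in $\L^{2+\delta}$. Summing the $2^{m}$ terms gives $2^{m(1-p/2)}$, so the $\L^{p}$ norm is $\lesssim 2^{-m(1/2-1/p)}$. Together with the first term, this yields \eqref{eq_mixing_core:20}, uniformly in $n>m$.

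For Brownian motion the same decomposition applies without the first term:
\begin{equation*}
  \|B-\pi_{m}B\|_{\L^{\infty}}\le 2\max_{q}\sup_{t\in I_{q}}|B(t)-B(q2^{-m})|.
\end{equation*}
By Doob's (or the reflection) inequality and scaling, $\esp{\sup_{t\in I_{q}}|B(t)-B(q2^{-m})|^{p}}\lesssim 2^{-mp/2}$. The same union bound then gives $2^{-m(1/2-1/p)}$; there is no restriction on $p$ here, although $p\ge 2$ suffices.

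The only delicate point is the second step: recognising that the within-cell supremum of the piecewise-linear path reduces exactly to a maximal partial sum over a block of length $2^{n-m}$. This is what allows the maximal inequality to be used rather than merely a pointwise moment bound. The crude union bound over cells costs the factor $2^{m/p}$, and this factor is why the rate depends on $p$ rather than on $\eta$.
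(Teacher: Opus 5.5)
Your proof is correct and follows essentially the paper's route. You bound the difference pathwise by twice the maximal block partial sum, replace the maximum over the $2^{m}$ cells by a sum, and apply Theorem~\ref{thm:mixing_wiener:maximal_inequality} on each block of length $2^{n-m}$. The only difference is your detour through $X^{n}$, which creates the extra term $2^{-n/2}\max_{k}|X_{k}|$; the paper avoids it by noting that $Y^{n}(t)-X^{n}(q2^{-m})$ is itself a block partial sum $2^{-n/2}S_{q2^{n-m}}(i)$, but your extra term is harmless since it is $O(2^{-n(1/2-1/p)})$.
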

\begin{proof}
  Let $S_{k}(i)=\sum_{j=k}^{k+i-1}X_{j}$ as in
  Theorem~\ref{thm:mixing_wiener:maximal_inequality}. Fix
  $q\in \{0,\dots,2^{m}-1\}$ and $t\in [q2^{-m},\, (q+1)2^{-m})$. Since
  $\pi_{m}X^{n}$ is affine on this interval, $\pi_{m}X^{n}(t)$ lies between
  $X^{n}(q2^{-m})$ and $X^{n}((q+1)2^{-m})$, and these two values differ by
  $2^{-n/2}|S_{q2^{n-m}}(2^{n-m})|$. Furthermore, $Y^{n}(t)$ and $X^{n}(q2^{-m})$
  differ by at most $2^{-n/2}\max_{1\le i\le 2^{n-m}+1}|S_{q2^{n-m}}(i)|$. Hence
  \begin{equation*}
    \|Y^{n}-\pi_{m}X^{n}\|_{\L^{\infty}}\le 2\cdot 2^{-n/2}\ \max_{0\le q<2^{m}}\ \max_{1\le i\le 2^{n-m}+1}\bigl|S_{q2^{n-m}}(i)\bigr|.
  \end{equation*}
  Bounding the maximum over~$q$ by the sum over~$q$ and applying
  Theorem~\ref{thm:mixing_wiener:maximal_inequality} to each block, we get
  \begin{equation*}
    \esp{\|Y^{n}-\pi_{m}X^{n}\|_{\L^{\infty}}^{p}}\lesssim 2^{-np/2}\ \sum_{q=0}^{2^{m}-1}\esp{\max_{1\le i\le 2^{n-m}+1}\bigl|S_{q2^{n-m}}(i)\bigr|^{p}}\lesssim 2^{-np/2}\ 2^{m}\ 2^{(n-m)p/2},
  \end{equation*}
  that is to say $\esp{\|Y^{n}-\pi_{m}X^{n}\|_{\L^{\infty}}^{p}}^{1/p}\lesssim
    2^{-m(1/2-1/p)}$. The very same computation applies to $B-\pi_{m}B$, using
  $\esp{\sup_{s\le h}|B_{s}|^{p}}^{1/p}\lesssim h^{1/2}$ instead of the maximal
  inequality.
\end{proof}
By the very same techniques as before, we arrive at the following rate of convergence.
\begin{theorem}
  \label{thm:optimisation_skorohod}
  Let $X\in \mathcal{W}_{\delta,\phi}^{\theta}$ and let $p\in (2,\,2+\delta]$. Let
  $\bar{\delta}$ and $c$ be as in~\eqref{eq_mixing_core:15} and assume
  that~\eqref{eq_mixing_core:16} holds. Set
  \begin{equation*}
    \label{eq_mixing_core:21}
    \tilde\gamma^{*}=\frac{2\rho\theta\bar{\delta}-\bar{\delta}-2}{(2+\bar{\delta})(2\rho\theta+3)-\dfrac{2}{p}\,(2\rho\theta+2+\bar{\delta})}\cdot
  \end{equation*}
  Then, we have
  \begin{equation*}
    \label{eq_mixing_core:9b}
    \dist_{\KR(\DD,\L^{\infty})}\left(\law(Y^{n}),\ \law(\sigma B)\right)\lesssim N^{-\tilde\gamma^{*}( 1/2-1/p )}\log N
  \end{equation*}
  where $N=2^{n}$.

  Moreover, $\tilde\gamma^{*}>\gamma^{*}$ and
  $\tilde\gamma^{*}\to \gamma^{*}$ as $p$ goes to infinity. The exponent
  $\tilde\gamma^{*}(1/2-1/p)$ is increasing with respect to~$p$: the best rate is
  thus obtained for $p=2+\delta$.
\end{theorem}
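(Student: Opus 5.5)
The plan is to repeat the proof of Theorem~\ref{thm:optimisation}, with two changes: the interpolation error now comes from Lemma~\ref{lem:skorohod_interpolation}, and the middle term carries the prefactor \eqref{eq_mixing_core:22}. As before, for $F\in\Lip_{1}(\DD,\L^{\infty})$ I split $\esp{F(Y^{n})}-\esp{F(\sigma B)}$ into three pieces: $Y^{n}$ versus $\pi_{m}X^{n}$, then $\pi_{m}X^{n}$ versus $\sigma\pi_{m}B$, then $\sigma\pi_{m}B$ versus $\sigma B$. By Lemma~\ref{lem:skorohod_interpolation} and Jensen ($p\ge 1$), the two extreme pieces are $\lesssim 2^{-m(1/2-1/p)}$.

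For the middle piece I rerun the coupling argument of Theorem~\ref{thm_mixing_core:reduction_finite_dimension} in $(\DD,\L^{\infty})$, replacing $\sup_{q}\|h^{m}_{q}\|_{W_{\eta,p}}$ by $2^{-m/2}$. This gives the bound
\begin{equation*}
2^{-m/2}\,\dist_{\KR(\R^{2^{m}})}\bigl(\law(\tau U^{n}_{m}),\law(B_{m})\bigr).
\end{equation*}
The finite-dimensional distance is handled exactly as before. Theorem~\ref{thm:general_principle} applies with $a\lesssim 2^{m}$ (Theorem~\ref{thm:P_xi_minus_f}) and with $b=b_{n,m,k}$ given by \eqref{eq_mixing_core:19}, since the bounds on $T_{1},\dots,T_{6}$ do not involve the function space. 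So the middle piece is
\begin{equation*}
\lesssim 2^{-m/2}\,b_{n,m,k}\,(1+\log(a/2b)).
\end{equation*}

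The new feature is that $b_{n,m,k}$ no longer needs to stay bounded. It suffices that $2^{-m/2}b_{n,m,k}\lesssim 2^{-m(1/2-1/p)}$, i.e.\ $b_{n,m,k}\lesssim 2^{m/p}$, and the logarithm then contributes the factor $\log N$. Writing $m=\gamma n$ and $k=\alpha n$, each of the seven exponent conditions is relaxed from ``$\le 0$'' to ``$\le \gamma/p$''.

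The key step is to show that, as in the $W_{\eta,p}$ case, the binding constraints are again the ones coming from $T_{4}$ (upper bound on $\alpha$) and from $T_{2}$ (lower bound on $\alpha$). With $\bar\delta$ and $c$ as in \eqref{eq_mixing_core:15}, these read
\begin{align*}
\alpha &\le c-\gamma\Bigl(1-\frac{2}{p(2+\bar\delta)}\Bigr),\\
\alpha &\ge \frac{1}{2\rho\theta}\Bigl(1+\gamma\bigl(3-\tfrac{2}{p}\bigr)\Bigr).
\end{align*}
Equating them and multiplying by $2\rho\theta(2+\bar\delta)$ gives exactly the stated value of $\tilde\gamma^{*}$. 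Its numerator $2\rho\theta\bar\delta-2-\bar\delta$ is positive by \eqref{eq_mixing_core:16}. The total rate is then $2^{-m(1/2-1/p)}\log N=N^{-\tilde\gamma^{*}(1/2-1/p)}\log N$.

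The main obstacle is justifying that the other five relaxed constraints remain non-binding at this intersection point. I would redo the comparisons from the proof of Theorem~\ref{thm:optimisation} with the slopes perturbed by $O(1/p)$:
\begin{itemize}
\item the $T_{1}$, $T_{5}$, $T_{6}$ upper bounds should still dominate $c-\gamma(1-\frac{2}{p(2+\bar\delta)})$, because $c\le 1/3$, $\gamma<c$ and $p>2$;
\item the $T_{1}$ and $T_{3}$ lower bounds, namely $\alpha\ge(\gamma-\gamma/p)/(\rho\theta-1)$ and $\alpha\ge(2-1/p)\gamma/(\rho\theta-1)$, should still lie below the $T_{2}$ line near the intersection.
\end{itemize}
To control these perturbations I would check the intersection of the $T_{2}$ and $T_{3}$ lines as before, verifying that it lies above the $T_{4}$ line.

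Finally, the monotonicity claims are elementary manipulations of the closed formula:
\begin{itemize}
\item The denominator of $\tilde\gamma^{*}$ is smaller than $(2+\bar\delta)(2\rho\theta+3)$, while $\gamma^{*}=\frac{2\rho\theta\bar\delta-\bar\delta-2}{(2+\bar\delta)(2\rho\theta+3)}$. Hence $\tilde\gamma^{*}>\gamma^{*}$, with $\tilde\gamma^{*}\to\gamma^{*}$ as $p\to\infty$.
\item To show that $\tilde\gamma^{*}(1/2-1/p)$ increases in $p$, I differentiate in $u=1/p$ the function $(1/2-u)/(D_{0}-2uE)$, where $D_{0}=(2+\bar\delta)(2\rho\theta+3)$ and $E=2\rho\theta+2+\bar\delta$. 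Its sign reduces to $E<D_{0}$, which holds.
\end{itemize}
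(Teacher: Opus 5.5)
Your proposal is correct and is the argument the paper has in mind when it says ``by the very same techniques''. It uses the same three-term split, with Lemma~\ref{lem:skorohod_interpolation} for the extreme terms and the prefactor $2^{-m/2}$ for the middle one, relaxes the exponent conditions on $b_{n,m,k}$ to $\le\gamma/p$, and takes the $T_{2}$ and $T_{4}$ lines as the binding pair. Your formulas reproduce $\tilde\gamma^{*}$ exactly, and the check that the $T_{2}$/$T_{3}$ intersection lies above the $T_{4}$ line goes through, as do the comparison with $\gamma^{*}$ and the monotonicity in $p$.

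One correction is needed in the non-binding check. After relaxation, $\gamma<c$ no longer holds in general. For instance, $\bar\delta=1$, $p=5/2$, $\rho\theta=20$ give $\tilde\gamma^{*}=37/94.6\approx 0.39>1/3$. Use instead the consequence of $\alpha>0$, namely $\gamma(2+\bar\delta-2/p)<\bar\delta$. Under it, the gaps between the $T_{6}$ and $T_{5}$ upper lines and the $T_{4}$ line are $\frac{2-\bar\delta}{2(2+\bar\delta)}(1-\gamma/p)$ and $\frac{2}{2+\bar\delta}\bigl(1-\gamma(2+\bar\delta)+\frac{\gamma}{p}(1+\bar\delta)\bigr)$. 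Both are nonnegative. The $T_{1}$ comparison is immediate since $\gamma<1/2$.
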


\section*{Declarations}

\begin{itemize}
  \item Funding:
        LD was supported by the French National Research Agency (ANR)
        \textit{via} the project no.~ANR-22-PEFT-0010 of the France
        2030 programme PEPR R\'eseaux du Futur.

  \item AI: Claude Opus 5 was used to verify the computations of the
        optimization problems in the proofs of
        Theorems~\ref{thm:optimisation} and~\ref{thm:optimisation_skorohod}.
\end{itemize}

\bibliography{BibFile} \bibliographystyle{amsplain}
\end{document}